\newcommand{\eg}{{\it e.g.}}
\newcommand{\ie}{{\it i.e.}}
\theoremstyle{plain}
\newtheorem{theorem}{Theorem}[section]
\newtheorem{lemma}[theorem]{Lemma}
\newtheorem{proposition}[theorem]{Proposition}
\theoremstyle{definition}
\theoremstyle{remark}
\newtheorem{remark}[theorem]{Remark}
\renewcommand{\tilde}{\widetilde}
\newcommand{\sm}{\setminus}
\renewcommand{\d}{ \mathrm{d}}
\newcommand{\x}{{\times}}
\newcommand{\ev}{\mathrm{ev}}
\newcommand{\diag}{\mathrm{diag}}
\renewcommand{\div}{\mathrm{div}}
\numberwithin{equation}{section}
\newcommand{\A}{\mathcal{A}}
\newcommand{\R}{\mathbb{R}}
\renewcommand{\P}{\mathbb{P}} 
\newcommand{\Prob}{\mathcal{P}}
\newcommand{\de}{\partial} 
\newcommand{\e}{\varepsilon}
\newcommand{\U}{\mathcal{U}}
\newcommand{\F}{\mathscr{F}}
\newcommand{\W}{\mathcal{W}}
\newcommand{\M}{\mathcal{M}}
\newcommand{\E}{\mathbb{E}}
\newcommand{\D}{\mathrm{D}}
\renewcommand{\O}{\mathcal{O}}
\newcommand{\myequation}{\begin{equation}}
  \newcommand{\myendequation}{\end{equation}}
  \title[Replicator dynamics as the large population limit of a discrete Moran process]{Replicator dynamics as the large population limit of a discrete Moran process in the weak selection regime: A proof via Eulerian specification}
  \author[M. Morandotti]{M. Morandotti}
  \address[Marco Morandotti]{\newline Dipartimento di Scienze Matematiche ``G. L. Lagrange'', Politecnico di Torino, Corso Duca degli Abruzzi 24, I--10129 Torino, Italy.}
  \email[]{marco.morandotti@polito.it}
  \author[G. Orlando]{G. Orlando}
  \address[Gianluca Orlando]{\newline
   Dipartimento di Meccanica, Matematica e Management, Politecnico di Bari,
   Via E.~Orabona 4, I--70125 Bari, Italy.}
  \email[]{gianluca.orlando@poliba.it}
\subjclass[2020]{35Q91, 60J10, 49Q22, 58D25.}
\begin{document}

\maketitle

\begin{abstract}
  We study the large population limit of a multi-strategy discrete-time Moran process in the weak selection regime. 
  We show that the replicator dynamics is interpreted as the large-population limit of the Moran process.
  This result is obtained by interpreting the discrete process in its Eulerian specification, proving a compactness result in the Wasserstein space of probability measures for the law of the proportions of strategies, and passing to the limit in the continuity equation that describes the evolution of the proportions.
\end{abstract}

\tableofcontents

\section{Introduction}

Understanding the behavior of large populations of interacting agents is crucial in a variety of scientific fields, including biology~\cite{SmiPri73,TayJon78}, sociology \cite{DMPW09,Tos06}, ecology \cite{DorBlu05,Ken10}, artificial intelligence \cite{Hol75}, and economics \cite{Car21}.  
Evolutionary game theory provides the mathematical framework to model and analyze these dynamics, in which agents are typically endowed with strategies. These are rationally selected by each agent according to complex mechanisms, often influenced by the interplay of the individuals among themselves and with the environment, and based on an underlying optimality principle. 

In this context, the replicator dynamics accomplishes the task of modeling the evolution of strategies in the population by subjecting the probability of reproduction to their fitness: The higher the fitness, the higher the chance of being selected to reproduce. 
More specifically, letting $\U = \{u_1, \dots, u_M\}$ be the set of strategies and $\lambda(t) = (\lambda_{u_1}(t), \dots, \lambda_{u_M}(t))^\top$ be the vector collecting the proportions of agents with strategies $u_1,\dots,u_M$ at time $t$, and given a fixed payoff matrix $A = (a_{ij}) \in \R^{M \x M}$, the fitness of strategy $u_i$ at time $t$ is modeled as (see~\cite[Chapter~8]{HofSig98})
\[
(A \lambda(t))_i - (A \lambda(t)) \cdot \lambda(t) = \sum_{j=1}^M a_{ij} \lambda_{u_j}(t) - \sum_{\ell=1}^M\sum_{j=1}^M a_{\ell j} \lambda_{u_j}(t) \lambda_{u_\ell}(t)  \, , \quad \text{for } i = 1, \dots, M \, .
\]
Interpreting $a_{ij}$ as the payoff of an agent with strategy $u_i$ interacting with an agent with strategy~$u_j$, the formula above measures the performance of the selected strategy $u_i$ compared to the average performance of all the strategies.
The replicator dynamics encodes the Darwinian principle that if~$u_i$ outperforms the average, then its fitness is positive and its spreading within the population is favored. Vice versa, strategies which underperform are progressively suppressed.
In formulae, the replicator dynamics evolves according to the system of ODEs
\[ \label{eqintro:replicator_dynamics}
\begin{split}
  \frac{\d}{\d t} \lambda_{u_i}(t) &= \lambda_{u_i}(t) \big( (A \lambda(t))_i - (A \lambda(t))\cdot \lambda(t) \big) \\
  & \eqqcolon b_i(\lambda(t)) \, ,
\end{split} \quad \quad  t \in (0,T) \, ,  \quad \text{for } i = 1, \dots, M \, ,
\]
subject to a given initial condition $\lambda(0) = \lambda^0$. 
The criterion followed by individual agents to select their strategies is latent in~\eqref{eqintro:replicator_dynamics}, which describes the continuous-time evolution of the proportions of strategies in an averaged fashion.

The objective of this paper is to provide the mathematical framework to derive~\eqref{eqintro:replicator_dynamics} as a mean-field limit of a discrete stochastic process modeling this evolutionary mechanism from the point of view of individual agents and their pairwise interactions.

\begin{figure}[h]
  \includegraphics[scale=0.8]{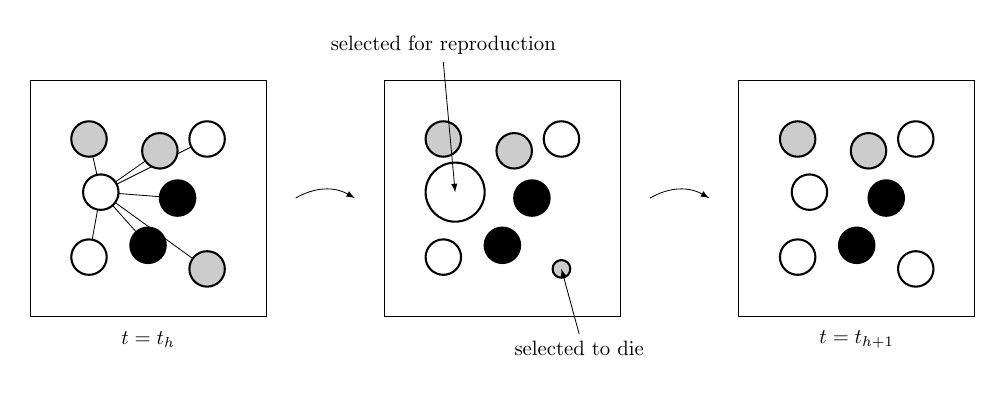}
  \caption{Graphical representation of one time step of a Moran process. In the picture, we have $N = 8$ agents and $M = 3$ strategies, $u_1$ (white circle), $u_2$ (grey circle), $u_3$ (black circle). At time $t_h$, for each agent a fitness is calculated in terms of the expected payoff due to the interaction with an agent sampled randomly uniformly in the population. The fitness is used to determine the probability of reproduction. In the figure, an agent with strategy $u_1$ has been chosen for reproduction (biggest circle), and an agent with strategy $u_2$ has been chosen to die (smallest circle). The generation is updated at time $t_{h+1}$ accordingly.}
  \label{fig:Moran_process}
\end{figure}

Before describing in detail this process, we recall the classical \emph{Moran process}~\cite{Mor62}, a prototypical example of discrete stochastic process used in evolutionary biology to model natural selection in finite populations with two strategies (alleles, in biology) competing for dominance. 
Letting $M = 2$, the set of strategies becomes $\U = \{u_1, u_2\}$. 
We denote by $N$ the number of individuals in the population, and by $N_{u_i}$ the number of individuals with strategy $u_i$ at a given time.
If no fitness affects the evolution (\ie, a neutral mutation scenario), the Moran process is a discrete-time Markov chain for $N_{u_1}$ with uniform birth-death transition probabilities given by: 
\[ \label{eqintro:vanilla_Moran_process}
\begin{split}
  & \P\Big(\text{``$N_{u_1}$ increases by 1''}\Big) = \frac{N_{u_1}}{N} \frac{N_{u_2}}{N}  = \frac{N_{u_1}}{N} \frac{N-N_{u_1}}{N} \\
  & = \P\Big(\text{``a $u_1$-individual is chosen for reproduction''}\Big) \P\Big(\text{``a $u_2$-individual is chosen to die''} \Big) \, ,\\
  & \P\Big(\text{``$N_{u_1}$ decreases by 1''}\Big) = \frac{N_{u_2}}{N} \frac{N_{u_1}}{N} = \frac{N-N_{u_1}}{N} \frac{N_{u_1}}{N} \\
  & = \P\Big(\text{``a $u_2$-individual is chosen for reproduction''}\Big) \P\Big(\text{``a $u_1$-individual is chosen to die''} \Big) \, ;
\end{split}
\]
by complementarity, one obtains the probability that $N_{u_1}$ remains unchanged. 

When fitness of strategies is introduced, the Moran process is modified to account for the fact that the higher the fitness of a strategy, the higher the probability that one of its bearers is selected to reproduce.
More precisely, letting $f_{u_i}$ be the fitness of individuals with strategy $u_i$, the reproduction probabilities in~\eqref{eqintro:vanilla_Moran_process} are replaced by
\[ \label{eqintro:fitness_Moran_process_2strategies}
\begin{split}
  \P\Big(\text{``a $u_1$-individual is chosen for reproduction''}\Big) & = \frac{N_{u_1}f_{u_1}}{N_{u_1}f_{u_1} + N_{u_2} f_{u_2}} \, , \\
  \P\Big(\text{``a $u_2$-individual is chosen for reproduction''}\Big) & =  \frac{N_{u_2}f_{u_2}}{N_{u_1}f_{u_1} + N_{u_2} f_{u_2}}  \, .
\end{split}
\]
Let us discuss the case where a payoff matrix $A \in \R^{2\times 2}$ is used to describe interactions. An individual carrying strategy $u_1$ interacts with another individual chosen randomly uniformly among the others in the population and has an expected payoff 
\[
\pi_{u_1} = a_{11} \frac{N_{u_1}-1}{N-1} + a_{12} \frac{N_{u_2}}{N-1} \, .
\]
Analogously, for an individual carrying strategy $u_2$, the expected payoff is
\[
\pi_{u_2} = a_{21} \frac{N_{u_1}}{N-1} + a_{22} \frac{N_{u_2}-1}{N-1} \, .
\]
A common choice~\cite{Now06} is to define the fitness of individuals with strategy $u_i$ as a convex combination of the expected payoff and the constant $1$, \ie,
\[
f_{u_i} = (1-w) + w \pi_{u_i} \, , \quad \text{for } i = 1,2 \, ,
\]
where $w \in [0,1]$ is a weight that interpolates between the case of neutral selection $w = 0$, giving~\eqref{eqintro:vanilla_Moran_process}, and the case of strong selection, $w = 1$, where the fitness is solely determined by the payoffs.
The regime $w \approx 0$ is typically referred to as the \emph{weak selection} regime, where the payoffs provide a small contribution to the fitness.

In this paper, we consider a generalization of the Moran process to a population with strategies $\U = \{u_1, \dots, u_M\}$, $M \geq 2$, see Figure~\ref{fig:Moran_process}.
In this case, the probability that an individual with strategy $u_i$ is chosen for reproduction (as in~\eqref{eqintro:fitness_Moran_process_2strategies}) is given by
\[ \label{eqintro:probability_Moran_process_Mstrategies}
\begin{split}
  \P\Big(\text{``a $u_i$-individual is chosen for reproduction''}\Big) & = \frac{N_{u_i}f_{u_i}}{\sum_{j=1}^M N_{u_j}f_{u_j} } \, ,
\end{split}
\]
where $N_{u_i}$ is the number of individuals with strategy $u_i$ in the population and $f_{u_i}$ is the fitness of individuals with strategy $u_i$ defined via the convex combination
\mathtoolsset{showonlyrefs=false}
\[ \label{eqintro:fitness_Moran_process_Mstrategies}
f_{u_i} = (1-w) + w \pi_{u_i} \, , \quad \text{for } i = 1, \dots, M \, .
\]
\mathtoolsset{showonlyrefs=true}
The expected payoff of an individual with strategy $u_i$ interacting with another individual chosen uniformly at random in the population is now given by
\[ \label{eqintro:expected_payoff_Mstrategies}
\pi_{u_i} = \sum_{\substack{j=1 \\ j \neq i}}^M a_{ij} \frac{N_{u_j}}{N-1} + a_{ii} \frac{N_{u_i}-1}{N-1} \, , \quad \text{for } i = 1, \dots, M \, .
\]
In this paper we study rigorously the asymptotic behavior of this generalized Moran process by considering:
\begin{itemize}
  \item the large population limit $N \to +\infty$;
  \item the discrete-time to continuous-time limit; letting $\tau$ be the time step of the process, this corresponds to $\tau \to 0$;
  \item the weak selection regime $w \to 0$.
\end{itemize}
From our analysis it emerges that the relation between the three limits have to be taken simultaneously and with a precise relation among the parameters to obtain a nontrivial and meaningful result.
 To describe this relation between these parameters, we introduce sequences $N_k \to + \infty$, $\tau_k \to 0$, and $w_k \to 0$.

Our main result can be summarized as follows:
Assuming that 
\[ \label{eqintro:scaling_parameters}
N_k \sim \tau_k^{-\alpha} \, , \quad  w_k \sim \tau_k^\beta \, , \quad \text{for some } \alpha, \beta > 0 \, , \quad \alpha + \beta = 1 \, , \quad  \alpha > \frac{1}{2} \, ,
\]
the replicator dynamics describes the limit as $k \to \infty$ of the generalized Moran process modeled by~\eqref{eqintro:probability_Moran_process_Mstrategies}--\eqref{eqintro:expected_payoff_Mstrategies}.

Previous works in the literature addressed the problem of deriving the replicator dynamics from discrete stochastic processes, see, e.g., \cite{BorSar97, TraClaHau05, TraClaHau06, ChaSou09, Hil11, ChaSou14}. 
The results in~\cite{TraClaHau05, TraClaHau06, ChaSou09, ChaSou14} are aligned with the scaling assumptions~\eqref{eqintro:scaling_parameters}. 
In fact, they are not merely technical, as they determine the PDE governing the limit dynamics. 
In particular, if $\alpha + \beta > 1$, then the result is trivial (the evolution is constant). If $\alpha + \beta < 1$, a different time scaling would be necessary. Finally, if $\alpha = 1/2$, then higher order terms have to be considered in the limit dynamics~\cite{TraClaHau05, TraClaHau06, ChaSou09, ChaSou14}.

We describe now more in detail the results obtained in this paper.
The discrete process is fully described by the stochastic evolution of the vector $\lambda^k(t_h)$ collecting the proportions of strategies at discrete times $t_h = h \tau_k$, for $h = 0, \dots, k$, see Section~\ref{sec:discrete_stochastic_process}. 
Noticing that the components of $\lambda^k(t_h)$ sum to one, we can interpret $\lambda^k(t_h)$ as a random point in the $(M-1)$-dimensional simplex $\Delta^{M-1} \subset \R^M$ (recall that $M$ is number of available strategies). 
The discrete path $\lambda^k(t_h)$ is suitably interpolated to obtain a continuous path $\lambda^k \colon [0,T] \to \Delta^{M-1}$, see~\eqref{eq:piecewise_affine_interpolation}. 
The asymptotic behavior of the discrete process as $k \to \infty$ is obtained in terms of the limit of the law $\Lambda^k_t \in \Prob(\Delta^{M-1})$ of the random vector $\lambda^k(t)$ (see~\eqref{eq:Lambda_k_t}); for every $k$, the map $t \mapsto \Lambda^k_t \in \Prob(\Delta^{M-1})$ is a continuous path in the space of probability measures on the simplex $\Delta^{M-1}$.
In Proposition~\ref{prop:approximate_distributional_solution}, we show that this path solves, in the sense of distributions, 
\[ \label{eqintro:approximate_limit}
\de_t \Lambda^k_t + \div \big(  b  \Lambda^k_t \big) \approx 0 \, , \quad \text{(with initial condition),}
\]
a continuity equation driven by the velocity field $b = (b_1, \dots, b_M)^\top$ with components defined in~\eqref{eqintro:replicator_dynamics}.
We turn the reader's attention to the symbol $\approx$ in~\eqref{eqintro:approximate_limit}, meaning that the equation holds in an approximate sense, up to an error term vanishing as $k \to \infty$ under the assumptions~\eqref{eqintro:scaling_parameters}.

A crucial technical step in our result is showing compactness of the sequence of paths $t \mapsto \Lambda^k_t$, that guarantees a meaningful limit as $k \to \infty$ to be taken in~\eqref{eqintro:approximate_limit}.
Since the probability measures $\Lambda^k_t$ have finite first moment (see Remark~\ref{rem:compactness_preliminaries}), it is natural to endow $\Prob(\Delta^{M-1})$ with the $1$-Wasserstein distance $\W_1$.
As we illustrate in Remark~\ref{rem:nontrivial_compactness}, straightforward estimates for the discrete stochastic process seem not to guarantee that the paths $t \mapsto \Lambda^k_t$ satisfy equi-continuity properties in the Wasserstein distance, ruling out an immediate application of available compactness results for paths of probability measures.
For this reason, we prove compactness in Theorem~\ref{thm:properties_of_Lambda} by deriving a more refined estimate and exploiting it in an argument that follows the lines of the proof of the original Arzel\`a--Ascoli theorem.
Once compactness is established, we can pass to the limit in~\eqref{eqintro:approximate_limit} and obtain that $\W_1(\Lambda^k_t, \Lambda_t) \to 0$ as $k \to \infty$ for every $t \in [0,T]$, where $\Lambda_t \in \Prob(\Delta^{M-1})$ is the solution to 
\[ \label{eqintro:limit}
\de_t \Lambda_t + \div \big(  b  \Lambda_t \big) = 0 \, , \quad \text{(with initial condition)}.
\]
We recognize that the latter equation is the Eulerian specification of the replicator dynamics~\eqref{eqintro:replicator_dynamics}, see Section~\ref{sec:replicator_dynamics}.

We conclude this introduction by remarking that the technique in this paper allows us to obtain the limit result with no regularity ansatz on the distribution of proportions $\Lambda^k_t$.
The approach we follow is inspired by \cite{AlmMorSol21,AmbForMorSav21,MorSol20}, where the equivalence of the Eulerian and Lagrangian notions of solution to~\eqref{eqintro:limit} is obtained via the \emph{superposition principle} \cite[Section~5.2]{AmbForMorSav21} (see also \cite[Theorem~8.2.1]{AmbGigSav08} and \cite{AmbTre14}).

\section{Notation and basic definitions}

In this section we collect some basic definitions and notation that will be used throughout the paper.

\subsubsection*{Probability}
Hereafter, we fix a probability space $(\Omega,\F,\P)$. 

If $X$ is a random variable, we usually adopt the notation $\hat X$ to indicate a fixed realization of the random variable $X$.

The expectation of a random variable $X$ is denoted by $\E[X]$.

We denote by $X_\# \P$ the pushforward of the probability measure $\P$ through the random variable~$X$, which defines a probability measure on the range of $X$.

\subsubsection*{$1$-Wasserstein distance} 
Let $(\M,d)$ be a complete, separable metric space, equipped with its Borel $\sigma$-algebra.  
We let $\Prob(\M)$ denote the set of Borel probability measures on $\M$.
The $1$-Wasserstein distance between two probability measures $\mu, \nu \in \Prob(\M)$ is defined by
\[
\W_1(\mu,\nu) = \inf_{\gamma} \int_{\M \x \M} d(x,y) \, \d \gamma(x,y) \, ,
\]
where the infimum is taken over all probability measures $\gamma \in \Prob(\M \x \M)$ with marginals $\mu$ and $\nu$.

We consider the subset of $\Prob(\M)$ consisting of probability measures with finite first moment, \ie,
\[
\Prob_1(\M) := \Big\{ \mu \in \Prob(\M) \ : \ \int_\M d(x_0,x) \, \d \mu(x) < +\infty \Big\} \, ,
\]
for some (and hence any) $x_0 \in \M$.
The $1$-Wasserstein distance is a metric on $\Prob_1(\M)$.

We recall the Kantorovich's duality formula for the $1$-Wasserstein distance~\cite[Remark~6.5]{Vil08}:
\[
\W_1(\mu,\nu) = \sup \Big\{ \int_\M \psi\, \d \nu - \int_\M \psi \, \d \mu \ : \   \psi \text{ 1-Lipschitz} \Big\} \, .
\]

Finally, we recall the following fact: If $\W_1(\mu_n,\mu) \to 0$ as $n \to \infty$, then 
\[
\int_\M \psi \, \d \mu_n \to \int_\M \psi \, \d \mu \, ,
\]
for every $\psi \in C(\M)$ such that $|\psi(x)|  \leq C(1 + d(x,x_0))$ for some $C > 0$ and some $x_0 \in \M$.



\section{The discrete stochastic process} \label{sec:discrete_stochastic_process}

In this section we describe in detail the multi-strategy Moran process studied in this paper.
Each subsection is dedicated to a specific aspect of the process.

\subsubsection*{Time}
We fix an equi-spaced discretization of step $\tau_k$ of a time interval $[0,T]$ given by
\[
0 = t_0 < t_1 < \dots < t_k = T \, , \quad  \tau_k := \frac{T}{k} \, , \quad t_{h} = h \tau_k  \, , \quad \text{for } h = 0, \dots, k \, .
\]

\subsubsection*{Population set}
We consider a population of $N\geq 2$ agents and we identify agents with elements $n \in \A := \{1,\dots,N\}$. 
We will be interested in the limit behavior of the population size $N \to \infty$. 
Later on, we will consider a sequence of population sizes of the form
\[ 
N_k \sim \tau_k^{-\alpha} \, , \quad \text{for some } \alpha > 0 \, .
\]
In this section, we stick to the notation $N$ for the population size, not to overload the notation.

\subsubsection*{Strategies}
Each agent is allowed to choose a strategy in a set $\U = \{u_1,\dots, u_M\}$.\footnote{When $M = 2$, the strategies $u_1$ and $u_2$ are typically called the \emph{mutant} and the \emph{resident}.} The strategy chosen by agent $n \in \A$ at the discrete time $t_h$, for $h \in \{0,\dots, k\}$, is represented by a random variable $S_n(t_h) \colon \Omega \to \U$. At each discrete time $t_h$, the state of the system is described by the random $N$-tuple 
\[
S(t_h) = (S_1(t_h) , \dots, S_N(t_h)) \colon \Omega \to \U^N \, .  
\]
For $i \in \{1,\dots,M\}$, we let $\A_{u_i}(t_h) \colon \Omega \to 2^{\A}$ be the random subset of agents with strategy $u_i$ at discrete time $t_h$ given by 
\[
\A_{u_i}(t_h) := \{n \in \A \ : \ S_n(t_h) = u_i \}   \, .
\]
Accordingly, we let $N_{u_i}(t_h) := \# \A_{u_i}(t_h)  \colon \Omega \to \{0,1,\dots,N\}$ be the random variable given by the number of agents with strategy $u_i$ at discrete time $t_h$. We remark that
\[ \label{eq:sum is N}
\sum_{i=1}^M N_{u_i}(t_h) = N \, .  
\]
Accordingly, for $i \in \{1,\dots,M\}$, we let $\lambda_{u_i}(t_h) := \frac{N_{u_i}(t_h)}{N}  \colon \Omega \to \{0,\frac{1}{N},\dots,\frac{N-1}{N},1\}$ be the proportion of agents with strategy $u_i$. We introduce the random vector collecting all proportions: 
\[ \label{eq:proportions}
\lambda(t_h) := (\lambda_{u_1}(t_h), \dots, \lambda_{u_M}(t_h) )^\top \colon \Omega \to [0,1]^M \, .
\]
We will refer to $\lambda(t_h)$ as the \emph{proportions} at time $t_h$. In fact, by~\eqref{eq:sum is N}, $\lambda(t_h)$ takes values in the $(M-1)$-dimensional simplex 
\[
\Delta^{M-1} = \Big\{ \lambda = (\lambda_1, \dots, \lambda_M)^\top \in [0,1]^M \ : \ \sum_{i=1}^M \lambda_i = 1 \Big\} \, .
\] 
 It follows that $\lambda(t_h)$ can be identified with the empirical probability measure on the strategy set $\mu(t_h) := \sum_{i=1}^M \lambda_{u_i}(t_h) \delta_{u_i}  \in \Prob(\U)$.

Later on, we will consider a sequence $N_k \to +\infty$ and the the proportions $\lambda(t_h)$ will depend on $k$ and will be denoted by $\lambda^k(t_h)$. 
In this section, we stick to the notation $\lambda(t_h)$, not to overload the notation.

If not explicitly specified, when a realization $S(t_h) = \hat S$ of the random state of the system $S(t_h)$ is observed, we will write $\A_{u_i}(t_h) = \hat \A_{u_i}$, $N_{u_i}(t_h) = \hat N_{u_i}$, and $\lambda(t_h) = \hat \lambda$ for the corresponding realizations of the random strategy sets, random number of agents, and random proportions, respectively.

\subsubsection*{Initial conditions} The process starts with an initial random state $S(0) = (S_1(0), \dots, S_N(0)) \colon \Omega \to \U^N$. 
This fixes an initial proportion $\lambda(0) = (\lambda_{u_1}(0), \dots, \lambda_{u_M}(0))^\top \in \Delta^{M-1}$.
We assume that the initial proportions $\lambda(0)$ are distributed according to a given law $\Lambda_0 \in \Prob(\Delta^{M-1})$.
This means that 
\[
\Lambda_0 = \lambda(0)_\# \P  \in \Prob(\Delta^{M-1}) \, .
\]
Later on, we will consider a sequence of initial laws $\Lambda_0^k \in \Prob(\Delta^{M-1})$.

\subsubsection*{Payoff matrix} The evolution of the strategies in the population will be described in terms of a payoff matrix $A = (a_{ij}) \in \R^{M \x M}$, where $a_{ij}$ is the payoff of an agent with strategy $u_i$ when interacting with an agent with strategy $u_j$. 
We shall assume that the entries of $A$ are non-negative, i.e., $a_{ij} \geq 0$ for all $i,j \in \{1,\dots,M\}$.
The payoff matrix is represented in the table below. 
\begin{table}[H]
  \centering
  \begin{tabular}{| c | c  c  c |}
    \hline
    \diagbox{1}{2} & $u_1$     & $\dots$  & $u_M$    \\
    \hline
    $u_1$          & $a_{11}$  & $\dots$  & $a_{1M}$  \\
    $u_2$          & $a_{21}$  & $\dots$  & $a_{2M}$  \\
    $\vdots$       & $\vdots$  & $\ddots$ & $\vdots$  \\
    $u_M$          & $a_{M1}$  & $\dots$  & $a_{MM}$  \\
    \hline
  \end{tabular} 
\end{table} 
At each time step $t_h$, an agent $n \in \A$ with strategy $u_i \in \U$ interacts with a different agent $n' \neq n$ chosen uniformly at random in $\A \sm \{n\}$. 
Specifically, given that the observed strategies at time $t_h$ are $S(t_h) = \hat S$, there are two possibilities for the second agent:
\begin{itemize}
  \item for $j \neq i$, agent $n'$ has strategy $u_j$ with probability $\frac{\hat N_{u_j}}{N-1} = \frac{N}{N-1} \hat \lambda_{u_j}$;
  \item for $j = i$, agent $n'$ has strategy $u_i$ with probability $\frac{\hat N_{u_i} - 1}{N-1} = \frac{N}{N-1}  \hat \lambda_{u_i}  - \frac{1}{N-1}$.
\end{itemize}
As a consequence, agent $n$ has an average payoff (depending on on the strategy $u_i$ of agent $n$, the proportions $\hat \lambda$, and $N$) given by
\[ \label{eq:average_payoff}
\pi_{u_i}(\hat \lambda) = \frac{N}{N-1} \sum_{j=1}^M a_{ij} \hat \lambda_{u_j} - \frac{1}{N-1} a_{ii} = \frac{N}{N-1} (A \hat \lambda)_i - \frac{1}{N-1} a_{ii} \, ,
\]
where $(A \hat{\lambda})_i$ denotes the $i$-th component of the matrix-vector product $A \hat \lambda$.

\subsubsection*{Fitness} Let $w \in [0,1]$ be a weight, whose precise value will be specified later. 
Given an agent $n \in \A$ with strategy $u_i \in \U$, and given that the observed strategies are $S(t_h) = \hat S$, we define the \emph{fitness} of agent $n$ at time $t_h$ as the convex combination
\[ \label{eq:fitness}
f_{u_i}(\hat \lambda) = (1-w) + w \pi_{u_i}(\hat \lambda)  \, .
\]
The weight $w$ is a parameter that determines the importance of the payoff in the fitness, as it interpolates between two extreme cases:
\begin{itemize}
  \item For $w = 0$, the fitness is constant and equal to $1$: This corresponds to a population where the agents are indifferent to the payoffs;
  \item For $w = 1$, the fitness is equal to the payoff. 
\end{itemize}
The regime $w \to 0$ is typically referred to as the \emph{weak selection} regime. 
We are interested in this regime, hence, in the rest of the paper, we will consider a sequence of weights $w_k \to 0$ of the form
\[ \label{eq:weight_sequence}
w_k \sim \tau_k^\beta \, , \quad \text{for some } \beta > 0 \, .
\]

Note that for $w_k$ small enough, the fitness $f_{u_i}(\hat \lambda)$ is non-negative.

\subsubsection*{Replication} At each new time step $t_{h+1}$, an agent $n \in \A$ is chosen to reproduce its strategy, depending on the state of the system at the time $t_{h}$.

The probability that an agent $n \in \A$ with strategy $u_i$ is chosen to reproduce its strategy depends on its fitness. 
We explain this precisely in the following. Let $R(t_{h+1}) \colon \Omega \to \A$ denote the random variable representing the agent chosen to reproduce its strategy at time $t_{h+1}$. 
Let $S(t_h) = \hat S$ be an observed realization of the random state of the system $S(t_h)$ at the previous time step.  
Then 
\[
\P\big(R(t_{h+1}) = n \ \big|\  S(t_h) = \hat S \big) = c f_{u_i}(\hat \lambda)  \quad \text{for every } n \in \hat \A_{u_i} \, .
\]
The value $c$ (depending on the status of the system) is chosen so that
\[
\sum_{\ell=1}^M \sum_{n \in \hat \A_{u_\ell}} \P\big(R(t_{h+1}) = n \ \big|\  S(t_h) = \hat S \big) = 1 \, ,
\]
hence 
\[
  1 = \sum_{\ell=1}^M \sum_{n \in \A_{u_\ell}(t_h)} c f_{u_\ell}(\hat \lambda) = c \sum_{\ell=1}^M \# \hat \A_{u_\ell}  f_{u_\ell}(\hat \lambda) =  c N \sum_{\ell=1}^M   \hat \lambda_{u_\ell}  f_{u_\ell}(\hat \lambda) \,.
\]
It follows that 
\[ \label{eq:probability_replication_u_i}
\begin{split}
  \P\big(R(t_{h+1}) \in \hat \A_{u_i} \ \big|\  S(t_h)  = \hat S \big) & = \sum_{n \in \A_{u_i}} \P\big(R(t_{h+1}) = n \ \big|\  S(t_h) = \hat S \big) \\
  & = N \hat \lambda_{u_i} \frac{1}{N \sum_{\ell=1}^M   \hat \lambda_{u_\ell}  f_{u_\ell}(\hat \lambda)} f_{u_i}(\hat \lambda)   =   \frac{\hat \lambda_{u_i} f_{u_i}(\hat \lambda)}{\sum_{\ell=1}^M   \hat \lambda_{u_\ell}  f_{u_\ell}(\hat \lambda)}  \,.
\end{split}
\]

Introducing the notation 
\[ \label{eq:average_fitness}
\overline f(\hat \lambda) = \sum_{\ell=1}^M \hat \lambda_{u_\ell} f_{u_\ell}(\hat \lambda) \, ,
\] 
to denote the average fitness of the population with proportions $\hat \lambda$, we conclude that 
\[
  \P\big(R(t_{h+1}) = n \ \big|\  S(t_h) = \hat S \big) = \frac{1}{\overline f(\hat \lambda)} f_{u_i}(\hat \lambda)  \quad \text{for every } n \in \hat \A_{u_i} \, .
\]

\subsubsection*{Abandoned strategy} 
At each new time step $t_{h+1}$, after the selection $R(t_{h+1}) = n$, an agent $n' \in \A$ (not necessarily distinct from the agent $n$) is chosen uniformly at random in $\A$ to abandon its strategy, independently from $R(t_{h+1})$. 
Letting $D(t_{h+1}) \colon \Omega \to \A$ denote the random variable (independent from $R(t_{h+1})$) representing the ``dead'' agent chosen to abandon its strategy at time $t_{h+1}$, we have
\[
\P\big(D(t_{h+1}) = n'\big) = \frac{1}{N} \quad \text{for every } n' \in \A \, .
\]
In particular, if $S(t_h) = \hat S$ is an observed realization of the random state of the system $S(t_h)$ at time $t_h$, then the probability that an agent with strategy $u_j$ is chosen to abandon its strategy is given by
\[ \label{eq:probability_abandonment_u_j}
\P\big(D(t_{h+1}) \in \hat \A_{u_j} \ \big|\  S(t_h) = \hat S \big) = \frac{\# \hat \A_{u_j}}{N} = \hat \lambda_{u_j} \, ,
\]
for $j = 1,\dots,M$.

\subsubsection*{Transition probabilities}  Let us fix $i, j \in \{1,\dots, M\}$ (possibly $i = j$).
Let us assume that at discrete time~$t_h$ we have observed a  state $S(t_h) = \hat S$. 
Let us compute the probability that the proportions transition from $\lambda(t_h) = \hat \lambda$ to $\lambda(t_{h+1}) = \hat \lambda + \frac{1}{N} (e_i -e_j)$, where $e_i, e_j \in \R^M$ are vectors of the standard basis of~$\R^M$. Writing more explicitly the updated proportions, we have that 
\[
  \begin{split}
    \hat \lambda + \frac{1}{N} ( e_i -  e_j) & = \Big(\hat \lambda_{u_1} , \dots, \underbrace{\hat \lambda_{u_i} + \frac{1}{N}}_{\text{place $i$}} , \dots, \underbrace{\hat \lambda_{u_j} - \frac{1}{N}}_{\text{place $j$}} , \dots, \hat \lambda_{u_M} \Big) \, ,
  \end{split}
\]
\ie, they are the proportions where the number of agents with strategy $u_i$ increases by 1 while the number of agents with strategy $u_j$ decreases by 1. 
This means that, given $\{\lambda(t_{h})  = \hat \lambda \}$, the event  $\{\lambda(t_{h+1})  = \hat \lambda + \frac{1}{N}( e_i -  e_j) \}$ occurs if and only if an agent $n$ with strategy $u_i$ is selected for replication and an agent $n'$ with strategy $u_j$ is chosen to abandon its strategy. 
In formulas, by independence, \eqref{eq:probability_replication_u_i} and~\eqref{eq:probability_abandonment_u_j}, we have that
\[ \label{eq:transition_probabilities}
\begin{split} 
  & \P\Big( \lambda(t_{h+1}) = \hat \lambda + \frac{1}{N}( e_i -  e_j) \ \Big|\  S(t_{h})  = \hat S \Big) = \P\big( \{ R(t_{h+1}) \in \hat \A_{u_i} \} \cap \{ D(t_{h+1}) \in \hat \A_{u_j} \} \ \big|\  S(t_{h})  = \hat S \big) \\
  & \quad = \P\big(  R(t_{h+1}) \in \hat \A_{u_i} \ \big|\  S(t_{h})  = \hat S \big) \P\big(  D(t_{h+1}) \in \hat \A_{u_j} \ \big|\  S(t_{h})  = \hat S \big) = \frac{1}{\overline f(\hat \lambda)} \hat \lambda_{u_i} f_{u_i}(\hat \lambda) \hat \lambda_{u_j}  \, . 
\end{split}
\]

The probability that the proportions remain unchanged is given by 
\[
\begin{split}
  & \P\big( \lambda(t_{h+1}) = \hat \lambda \ \big|\  S(t_{h})  = \hat S \big)  = 1 - \sum_{i=1}^M \sum_{\substack{j=1 \\ j \neq i}}^M \P\Big( \lambda(t_{h+1}) = \hat \lambda + \frac{1}{N}( e_i -  e_j) \ \Big|\  S(t_{h})  = \hat S \Big) \\
  & \quad = 1 - \sum_{i=1}^M \sum_{\substack{j=1 \\ j \neq i}}^M \frac{1}{\overline f(\hat \lambda)}\hat \lambda_{u_i} f_{u_i}(\hat \lambda) \hat \lambda_{u_j}  = 1 - \sum_{i=1}^M \sum_{j=1}^M \frac{\hat \lambda_{u_i} f_{u_i}(\hat \lambda) \hat \lambda_{u_j} }{\sum_{\ell=1}^M   \hat \lambda_{u_\ell}  f_{u_\ell}(\hat \lambda)} + \frac{\sum_{i=1}^M \hat \lambda_{u_i}^2 f_{u_i}(\hat \lambda) }{\sum_{\ell=1}^M   \hat \lambda_{u_\ell}  f_{u_\ell}(\hat \lambda)} \\ 
  & \quad = 1 - \sum_{i=1}^M \hat \lambda_{u_i} + \frac{\sum_{i=1}^M \hat \lambda_{u_i}^2 f_{u_i}(\hat \lambda) }{\sum_{\ell=1}^M   \hat \lambda_{u_\ell}  f_{u_\ell}(\hat \lambda)} = \frac{1}{\overline f(\hat \lambda)} \sum_{i=1}^M \hat \lambda_{u_i}^2 f_{u_i}(\hat \lambda)  \, .
\end{split}
\]

We concluded the description of the discrete stochastic process. 
We are interested in the large-population limit of the process as $N \to \infty$ and the time step $\tau_k \to 0$.
Before proceeding with the analysis, we introduce the main tool that we will use to study the limit of the process.

 
\section{Eulerian specification of the discrete stochastic process} \label{sec:Eulerian_description}
 
In this section we introduce a probability measure $\Lambda^k_t$ that we will use to describe the discrete stochastic process with an Eulerian point of view. 
After deriving some properties of this measure, we will show that $\Lambda^k_t$ is an approximate solution of a suitable continuity equation. 
This will also allow us to study the limit of the process as $N \to \infty$ and $\tau_k \to 0$.

\subsubsection*{Sequences $N_k$ and $w_k$} From now on, we will consider 
\[
N_k \sim \tau_k^{-\alpha} \, , \quad \text{for some } \alpha > 0 \, , \quad \quad w_k \sim \tau_k^\beta \, , \quad \text{for some } \beta > 0 \, ,
\]
where we recall that $\tau_k = \frac{T}{k}$ is the time step of the process.
Since we are interested in the limit behavior of the process as $k \to +\infty$, we will stress the dependence on $k$ of $\lambda^k(t_h)$, \ie, the proportions at time $t_h$ in the process described in Section~\ref{sec:discrete_stochastic_process} with population size $N_k$ and time step~$\tau_k$.

\subsubsection*{Piecewise affine interpolation} First of all we extend the discrete stochastic process to all times $t \in [0,T]$ by defining $\lambda^k(t) \colon \Omega \to \Delta^{M-1}$ by piecewise affine interpolation, \ie, we set 
\[ \label{eq:piecewise_affine_interpolation}
\lambda^k(t) =   \lambda^k(t_h) +  \frac{t - t_h}{\tau_k} ( \lambda^k(t_{h+1}) - \lambda^k(t_h) ) \in \Delta^{M-1} \, , \quad \text{for } t \in [t_h, t_{h+1}] \, ,
\]
for $h = 0, \dots, k-1$. 
For every realization $\omega \in \Omega$ of the probability space, we obtain a continuous path $\lambda^k(\cdot)(\omega) \in C([0,T],\Delta^{M-1})$. 
Hence, we have defined a random continuous path 
\[
\lambda^k(\cdot) \colon \Omega \to C([0,T],\Delta^{M-1}) \, .
\] 

\subsubsection*{Distribution of proportions} We define the probability measure on continuous paths
\[ \label{eq:Lambda_k}
\Lambda^k := \lambda^k(\cdot)_\# \P \in \Prob\big(C([0,T],\Delta^{M-1})\big) \, ,
\]
where $\lambda^k(\cdot)_\# \P$ denotes the pushforward of $\P$ through the random continuous path $\lambda^k(\cdot)$. 
The probability measure $\Lambda^k$ describes how the evolution of the random proportions $\lambda^k(t)$ is distributed over the space of continuous paths.

For every $t \in [0,T]$, we consider the evaluation map on continuous paths, \ie, 
\[
\ev_t \colon C([0,T],\Delta^{M-1}) \to \Delta^{M-1} \, , \quad \lambda \mapsto \ev_t(\lambda) = \lambda(t) \, .
\]
With this map, we define the probability measure 
\[ \label{eq:Lambda_k_t}
\Lambda^k_t = (\ev_t)_\# \Lambda^k = (\ev_t)_\# \lambda^k(\cdot)_\# \P = (\ev_t \circ \lambda^k(\cdot))_\# \P = \lambda^k(t)_\# \P  \in \Prob(\Delta^{M-1}) \, .
\]
This probability measure can be regarded as the distribution of the random proportions $\lambda^k(t)$ at a given time $t$.

\begin{remark}
  If one adopts the identification of $\Delta^{M-1}$ with the set of probability measures on $\U$, one can interpret $\Lambda^k_t$ as an element of $\Prob(\Prob(\U))$.
\end{remark}

\subsubsection*{Piecewise constant interpolation} We introduce the notation $\overline \Lambda^k_t$ for the piecewise constant curve defined by
\[ \label{eq:Lambda_bar_k_t}
\overline \Lambda^k_t = \Lambda^k_{t_h} \quad \text{for } t \in [t_h,t_{h+1}) \, , \quad \overline \Lambda^k_T = \Lambda^k_T \, .
\] 
In the next lemma we show that $\overline \Lambda^k_t$ and $\Lambda^k_t$ are close in the 1-Wasserstein distance as $k \to +\infty$.

\begin{lemma} \label{lem:Lambda_k_Lambda_bar}
  Let $\Lambda^k_t$ be as in~\eqref{eq:Lambda_k_t} and $\overline \Lambda^k_t$ be as in~\eqref{eq:Lambda_bar_k_t}.
   We have that $\W_1(\Lambda^k_t,\overline \Lambda^k_t) \to 0$ as $k \to +\infty$ for every $t \in [0,T]$.
\end{lemma}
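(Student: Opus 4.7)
The plan is to exhibit an explicit coupling between $\Lambda^k_t$ and $\overline\Lambda^k_t$ rather than argue abstractly, since both are pushforwards of $\P$ by random variables that are close in a pointwise (in $\omega$) sense. For $t \in [t_h, t_{h+1})$, the joint law $\gamma := (\lambda^k(t), \lambda^k(t_h))_\# \P$ is a probability measure on $\Delta^{M-1} \times \Delta^{M-1}$ whose marginals are, respectively, $\Lambda^k_t$ and $\Lambda^k_{t_h} = \overline\Lambda^k_t$, so it is an admissible competitor in the definition of $\W_1$. Consequently
\[
\W_1(\Lambda^k_t, \overline\Lambda^k_t) \leq \int_{\Delta^{M-1} \x \Delta^{M-1}} |x - y| \, \d \gamma(x,y) = \E\big[|\lambda^k(t) - \lambda^k(t_h)|\big].
\]

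The key observation is that the transition structure of Section~\ref{sec:discrete_stochastic_process} forces $\lambda^k(t_{h+1}) - \lambda^k(t_h) \in \{0\} \cup \{\frac{1}{N_k}(e_i - e_j) : i \neq j\}$ pointwise on $\Omega$, so $|\lambda^k(t_{h+1}) - \lambda^k(t_h)| \leq \frac{\sqrt{2}}{N_k}$ almost surely. Combining this with the piecewise affine definition~\eqref{eq:piecewise_affine_interpolation}, for $t \in [t_h, t_{h+1}]$ we have
\[
|\lambda^k(t) - \lambda^k(t_h)| = \frac{t - t_h}{\tau_k} |\lambda^k(t_{h+1}) - \lambda^k(t_h)| \leq \frac{\sqrt{2}}{N_k} \, ,
\]
since $\frac{t - t_h}{\tau_k} \in [0,1]$. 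Taking expectations and plugging into the previous display yields $\W_1(\Lambda^k_t, \overline\Lambda^k_t) \leq \frac{\sqrt{2}}{N_k}$, which tends to $0$ as $k \to +\infty$ because $N_k \sim \tau_k^{-\alpha} \to +\infty$. The boundary case $t = T$ is trivial since $\overline\Lambda^k_T = \Lambda^k_T$ by definition.

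I do not expect any genuine obstacle: the argument is essentially the deterministic bound on how much a single step of the Moran process can move the proportions (at most one birth and one death, each of mass $1/N_k$), promoted to a Wasserstein bound through the canonical self-coupling. No equi-continuity estimate is needed here; the refined compactness analysis alluded to in Remark~\ref{rem:nontrivial_compactness} is reserved for the comparison of $\Lambda^k_t$ at different times $t$, not between $\Lambda^k_t$ and its piecewise constant companion at the same time.
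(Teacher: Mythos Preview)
Your proof is correct and follows essentially the same approach as the paper: both reduce to the bound $\W_1(\Lambda^k_t,\overline\Lambda^k_t)\le \E[|\lambda^k(t)-\lambda^k(t_h)|]\le \sqrt{2}/N_k$ via the almost-sure step bound on the Moran increment and the piecewise-affine interpolation. The only cosmetic difference is that you use the primal definition of $\W_1$ with the explicit coupling $(\lambda^k(t),\lambda^k(t_h))_\#\P$, whereas the paper uses Kantorovich duality with $1$-Lipschitz test functions; these are equivalent routes to the same inequality.
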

\begin{proof}
  If $t = T$ there is nothing to prove. 
  Let us fix $t \in [t_h,t_{h+1})$ for some $h \in \{0, \dots, k-1\}$ and let us fix $\psi \colon \Delta^{M-1} \to \R$ a 1-Lipschitz function.
  By the definition of $\Lambda^k$ and $\overline \Lambda^k$, we have that
  \[ \label{eq:psi_Lambda_Lambda_bar}
  \begin{split}
    & \int_{\Delta^{M-1}} \psi(\lambda) \, \d \Lambda^k_t(\lambda) - \int_{\Delta^{M-1}} \psi(\lambda) \, \d \overline \Lambda^k_t(\lambda) \\
    & \quad = \int_{\Delta^{M-1}} \psi(\lambda) \, \d \big( \Lambda^k_t -  \Lambda^k_{t_h} \big)(\lambda)  = \int_{\Delta^{M-1}} \psi(\lambda) \, \d \big( \lambda^k(t) - \lambda^k(t_h) \big)_\# \P(\lambda) \\
    & \quad = \E\big[ \psi(\lambda^k(t)) - \psi(\lambda^k(t_h)) \big] \leq \E\big[ |\lambda^k(t) - \lambda^k(t_h)| \big] \, .
  \end{split}
  \]
  By~\eqref{eq:piecewise_affine_interpolation}, we have that
  \[ \label{eq:estimate_piecewise_affine_with_increment}
  \E\big[ |\lambda^k(t) - \lambda^k(t_h)| \big] = \frac{t-t_h}{\tau_k} \E\big[ |\lambda^k(t_{h+1}) - \lambda^k(t_h)| \big] \leq \E\big[ |\lambda^k(t_{h+1}) - \lambda^k(t_h)| \big] \, .
  \]
  To estimate the right-hand side, we apply the law of total probability and take the sum over all possible states of the system at time $t_h$:
  \[ \label{eq:total_probability}
    \E\big[|\lambda^k(t_{h+1}) - \lambda^k(t_h)|\big] = \sum_{\hat S} \E\big[|\lambda^k(t_{h+1}) - \hat \lambda| \ \big|\ S(t_h) = \hat S \big] \P\big(S(t_h) = \hat S\big) \, .
  \]
  Given $\lambda^k(t_h) = \hat \lambda$, the proportions $\lambda^k(t_{h+1})$ either remain unchanged or one component of the proportions increases by $\frac{1}{N_k}$ and another decreases by $\frac{1}{N_k}$.
  Hence,
  \[
    \E\big[|\lambda^k(t_{h+1}) - \hat \lambda| \ \big|\ S(t_h) = \hat S \big] \leq \frac{\sqrt{2}}{N_k} \, .
  \]
  By~\eqref{eq:psi_Lambda_Lambda_bar}--\eqref{eq:total_probability}, we infer that
  \[
  \int_{\Delta^{M-1}} \psi(\lambda) \, \d \Lambda^k_t(\lambda) - \int_{\Delta^{M-1}} \psi(\lambda) \, \d \overline \Lambda^k_t(\lambda) \leq \frac{\sqrt{2}}{N_k} \, , \quad \text{for every 1-Lipschitz function } \psi \, .
  \]
  Taking the supremum over all 1-Lipschitz functions $\psi$, by Kantorovich's duality we obtain that $\W_1(\Lambda^k_t,\overline \Lambda^k_t) \leq \frac{\sqrt{2}}{N_k} \to 0$ as $k \to +\infty$, concluding the proof.
\end{proof}

\subsubsection*{Continuity equation} To describe the Eulerian evolution of the process, it is convenient to introduce the vector field $b = (b_1,\dots,b^M)^\top \colon \Delta^{M-1} \to \R^M$ defined by
\[ \label{eq:b}
b_i(\hat \lambda) := \hat \lambda_{u_i} \big( (A \hat \lambda)_i - (A \hat \lambda)\cdot\hat \lambda \big) \, , \quad \text{for } i = 1, \dots, M \, .
\]
\begin{remark}
  The vector field $b$ is tangent to the simplex $\Delta^{M-1}$, \ie, $b(\hat \lambda) \in \mathrm{Tan}(\Delta^{M-1},\hat \lambda)$ for every $\hat \lambda \in \Delta^{M-1}$.
  Indeed, recalling that $e_1+\dots+e_M$ is the normal to $\Delta^{M-1}$ at every $\hat \lambda$, we have that
  \[
  \sum_{i=1}^M b_i(\hat \lambda) = \sum_{i=1}^M \hat \lambda_{u_i} \big( (A \hat \lambda)_i - (A \hat \lambda)\cdot \hat \lambda \big) = (A \hat \lambda)\cdot \hat \lambda - (A \hat \lambda)\cdot \hat \lambda = 0 \, .
  \]
\end{remark}
 
We have the following result, in which we show that, in a suitable sense, $\Lambda^k$ solves is an approximate distributional solution to the continuity equation
\[
\de_t \Lambda^k_t + \div \big(  b  \Lambda^k_t \big) \approx 0 \, .
\]

\begin{proposition} \label{prop:approximate_distributional_solution}
  Let $\Lambda^k_t$ be as in~\eqref{eq:Lambda_k_t} and $\overline \Lambda^k_t$ be as in~\eqref{eq:Lambda_bar_k_t}. 
  Let $\phi \in C^\infty_c((-\infty,T) \times \R^M)$. 
  Then 
  \[ \label{eq:Lambda_k_distributional_solution}
  \int_0^T \int_{\Delta^{M-1}} \de_t \phi(t,\lambda)  \, \d \Lambda^k_t(\lambda) \, \d t + \int_0^T \int_{\Delta^{M-1}} \D \phi(t,\lambda) b(\lambda)   \, \d \overline \Lambda^k_t(\lambda) \, \d t  = - \int_{\Delta^{M-1}} \phi(0,\lambda) \, \d \Lambda^k_0(\lambda) + \tilde \rho_k \, ,
  \]
  where 
  \[
  \tilde \rho_k = \O\Big(\frac{w_k}{\tau_k N_k^2}   \|\D\phi\|_\infty \Big) + \O\big(\frac{w_k^2}{\tau_k N_k}  \|\D \phi\|_\infty  \big)  + \O\Big( \frac{1}{\tau_k N_k^2}  \|\D^2 \phi\|_{L^\infty} \Big) \, .
  \]
\end{proposition}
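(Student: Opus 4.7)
The strategy is to apply the fundamental theorem of calculus to the random function $t \mapsto \phi(t, \lambda^k(t))$, exploiting the piecewise affine structure~\eqref{eq:piecewise_affine_interpolation} on each sub-interval $[t_h, t_{h+1}]$, and then to take expectations. Since $\phi \in C^\infty_c((-\infty, T) \x \R^M)$ the boundary term at $t = T$ vanishes, so
\[
-\phi(0, \lambda^k(0)) = \sum_{h=0}^{k-1} \int_{t_h}^{t_{h+1}} \Big[ \de_t\phi(t, \lambda^k(t)) + \D\phi(t, \lambda^k(t)) \cdot \tfrac{\lambda^k(t_{h+1}) - \lambda^k(t_h)}{\tau_k} \Big] \d t.
\]
Taking expectations and invoking $\Lambda^k_t = \lambda^k(t)_\#\P$ identifies the $\de_t\phi$ contribution with $\int_0^T\!\int_{\Delta^{M-1}} \de_t\phi \,\d\Lambda^k_t \,\d t$, while the left-hand side becomes $-\int_{\Delta^{M-1}} \phi(0,\cdot)\,\d\Lambda^k_0$. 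The task reduces to showing that $\sum_h \int_{t_h}^{t_{h+1}} \E[\D\phi(t, \lambda^k(t)) \cdot \tau_k^{-1}(\lambda^k(t_{h+1}) - \lambda^k(t_h))]\,\d t$ equals $\int_0^T\!\int \D\phi \cdot b \,\d\overline\Lambda^k_t\,\d t$ modulo $\tilde\rho_k$.

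Two reductions handle this. First, I would replace $\D\phi(t, \lambda^k(t))$ by $\D\phi(t, \lambda^k(t_h))$: the bound $|\lambda^k(t) - \lambda^k(t_h)| \leq |\lambda^k(t_{h+1}) - \lambda^k(t_h)| \leq \sqrt{2}/N_k$ (already used in the proof of Lemma~\ref{lem:Lambda_k_Lambda_bar}) together with the Lipschitz estimate $|\D\phi(t,\lambda) - \D\phi(t,\lambda')| \leq \|\D^2\phi\|_\infty |\lambda - \lambda'|$ yields, after dividing by $\tau_k$ and integrating over $[0,T]$, a total error of order $\|\D^2\phi\|_\infty / (\tau_k N_k^2)$, matching the third term of $\tilde\rho_k$. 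Second, the tower property conditioning on $S(t_h) = \hat S$, combined with~\eqref{eq:transition_probabilities}, gives by direct summation of $\tfrac{1}{N_k}(e_i - e_j)$ weighted by the transition probabilities (the contributions of $(i,j)$ and $(j,i)$ telescoping) the componentwise conditional mean
\[
\E\bigl[(\lambda^k(t_{h+1}) - \hat\lambda)_i \,\big|\, S(t_h) = \hat S\bigr] = \frac{\hat\lambda_{u_i}\bigl(f_{u_i}(\hat\lambda) - \overline f(\hat\lambda)\bigr)}{N_k \overline f(\hat\lambda)}.
\]

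The key computation is the expansion of this conditional mean. Substituting $f_{u_i} - \overline f = w_k(\pi_{u_i} - \sum_\ell \hat\lambda_{u_\ell}\pi_{u_\ell})$ from~\eqref{eq:fitness} and~\eqref{eq:average_fitness}, and unfolding $\pi_{u_i}$ via~\eqref{eq:average_payoff}, isolates the principal part $\tfrac{w_k N_k}{N_k - 1} b_i(\hat\lambda)$ plus a residue of size $O\bigl(w_k/(N_k - 1)\bigr)$ coming from the diagonal payoffs $a_{\ell\ell}$. Coupling this with $\overline f(\hat\lambda) = 1 + O(w_k)$ uniformly on $\Delta^{M-1}$ (by boundedness of the entries of $A$) and the expansion $(N_k-1)^{-1} = N_k^{-1} + O(N_k^{-2})$, the discrepancy between the factor $\tfrac{w_k}{(N_k-1)\overline f(\hat\lambda)}$ and the target coefficient in front of $b_i(\hat\lambda)$ is absorbed into errors of orders $O(w_k^2/(\tau_k N_k))$ and $O(w_k/(\tau_k N_k^2))$ after dividing by $\tau_k$. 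Multiplying by $\|\D\phi\|_\infty$, summing over $h$, and integrating over $[0,T]$ yields the first two terms of $\tilde\rho_k$; the surviving leading term $\E[\D\phi(t, \lambda^k(t_h)) \cdot b(\lambda^k(t_h))]$ is precisely $\int_{\Delta^{M-1}} \D\phi(t, \lambda) \cdot b(\lambda) \,\d \overline\Lambda^k_t$ by definition~\eqref{eq:Lambda_bar_k_t}. The main obstacle lies exactly in this expansion: one must track the factor $\tfrac{w_k}{(N_k-1)\overline f(\hat\lambda)}$ with enough precision, using the scaling normalization tacit in~\eqref{eq:weight_sequence}, so that the residue truly fits into the three stated error classes without producing additional contributions depending on $\hat\lambda$ beyond what is controlled by the uniform boundedness of $b$ and of the diagonal-payoff correction on the simplex.
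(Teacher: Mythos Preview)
Your proposal is correct and follows essentially the same route as the paper: the fundamental theorem of calculus along the piecewise affine path, the replacement of $\D\phi(t,\lambda^k(t))$ by $\D\phi(t,\lambda^k(t_h))$ at the cost of a $\|\D^2\phi\|_\infty/(\tau_k N_k^2)$ error, and the expansion of the conditional increment into $\tfrac{w_k}{N_k}b(\hat\lambda)$ plus the two first-order residues. The only cosmetic differences are that the paper isolates the conditional-increment expansion as a separate lemma (Lemma~\ref{lem:E_psi}) rather than inlining it, and writes the spatial replacement via Taylor's integral remainder rather than your direct Lipschitz bound; both yield the same estimates.
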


Before proving the proposition, we need a technical estimate.

\begin{lemma} \label{lem:E_psi}
  Let $\Lambda^k_t$ be as in~\eqref{eq:Lambda_k_t} and $\overline \Lambda^k_t$ be as in~\eqref{eq:Lambda_bar_k_t}. 
  Let $\psi \in C^\infty_c(\R^M)$. 
  Then 
  \[ \label{eq:E_psi}
    \E\big[ \D\psi(\lambda^k(t_h))[\lambda^k(t_{h+1}) - \lambda^k(t_h)]\big] = \frac{w_k}{N_k}   \int_{\Delta^{M-1}} \D \psi(\lambda) b(\lambda) \, \d \Lambda^k_{t_h}(\lambda) + \rho_k \, ,
  \]
  for $h = 0, \dots, k-1$, where 
  \[
  \rho_k = \O\Big( \frac{w_k}{N_k^2}  \|\D\psi\|_\infty \Big) + \O\Big( \frac{w_k^2}{N_k}  \|\D \psi\|_\infty  \Big)  \, .
  \]
\end{lemma}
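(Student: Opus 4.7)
The plan is to carry out a direct computation by conditioning on the state at time $t_h$ and using the transition probabilities computed in \eqref{eq:transition_probabilities}. Writing the law of total expectation and summing over realizations $\hat S$, the key quantity to analyze is
\[
\E\big[ \D\psi(\hat\lambda)[\lambda^k(t_{h+1})-\hat\lambda] \mid S(t_h)=\hat S\big] = \sum_{i\neq j} \D\psi(\hat\lambda)\cdot\tfrac{1}{N_k}(e_i-e_j) \,\tfrac{\hat\lambda_{u_i}f_{u_i}(\hat\lambda)\hat\lambda_{u_j}}{\overline f(\hat\lambda)}.
\]
Since the $i=j$ terms vanish, I would extend the sum to all $i,j$, split into two sums, and use $\sum_j \hat\lambda_{u_j}=1$ and $\sum_i \hat\lambda_{u_i}f_{u_i}(\hat\lambda)=\overline f(\hat\lambda)$ to collapse one of them. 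This reduces the expression to
\[
\frac{1}{N_k}\sum_{i=1}^M \partial_i\psi(\hat\lambda)\,\hat\lambda_{u_i}\,\frac{f_{u_i}(\hat\lambda)-\overline f(\hat\lambda)}{\overline f(\hat\lambda)}.
\]

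Next, I would exploit the explicit form of the fitness \eqref{eq:fitness}: the constant term $(1-w_k)$ cancels in the numerator, giving $f_{u_i}(\hat\lambda)-\overline f(\hat\lambda) = w_k\big(\pi_{u_i}(\hat\lambda) - \sum_\ell \hat\lambda_{u_\ell}\pi_{u_\ell}(\hat\lambda)\big)$. Plugging in the expression \eqref{eq:average_payoff} for $\pi_{u_i}$ and using $\tfrac{N_k}{N_k-1}=1+\tfrac{1}{N_k-1}$, I would show
\[
\pi_{u_i}(\hat\lambda) - \sum_{\ell=1}^M \hat\lambda_{u_\ell}\pi_{u_\ell}(\hat\lambda) = (A\hat\lambda)_i - (A\hat\lambda)\cdot\hat\lambda + \O\!\left(\tfrac{1}{N_k}\right),
\]
with an error bounded uniformly by a multiple of $\|A\|_\infty/N_k$ on $\Delta^{M-1}$. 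For the denominator, the elementary estimate $\overline f(\hat\lambda) = 1+\O(w_k)$ yields $1/\overline f(\hat\lambda) = 1+\O(w_k)$.

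Combining these expansions and recalling $b_i(\hat\lambda)=\hat\lambda_{u_i}\big((A\hat\lambda)_i-(A\hat\lambda)\cdot\hat\lambda\big)$, the leading order becomes $\tfrac{w_k}{N_k}\D\psi(\hat\lambda)\cdot b(\hat\lambda)$, while the error picks up two contributions: one of order $w_k/N_k$ coming from the $\O(1/N_k)$ in the payoff difference (times $\tfrac{1}{N_k}$ from the prefactor, giving $w_k/N_k^2$), and one of order $w_k^2/N_k$ coming from the $\O(w_k)$ in $1/\overline f$. Both are multiplied by $\|\D\psi\|_\infty$ after summing $|\partial_i\psi|\hat\lambda_{u_i}\leq \|\D\psi\|_\infty$. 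Averaging over $\hat S$ with weights $\P(S(t_h)=\hat S)$ rewrites the leading term as $\tfrac{w_k}{N_k}\int_{\Delta^{M-1}}\D\psi(\lambda)b(\lambda)\,\d\Lambda^k_{t_h}(\lambda)$ via the definition of $\Lambda^k_{t_h}$ as the pushforward of $\P$ through $\lambda^k(t_h)$, which closes the estimate.

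I do not expect any real obstacle in this lemma; it is essentially a careful one-step expansion of the Moran transition. The only point requiring care is the bookkeeping of the two distinct sources of error (the $1/N$ correction hidden in $\pi_{u_i}$ from the fact that self-interaction is excluded in \eqref{eq:average_payoff}, and the $w_k$ correction hidden in $1/\overline f$), since these produce the two separate $\O(w_k/N_k^2)$ and $\O(w_k^2/N_k)$ terms in $\rho_k$. Non-negativity of $A$ and boundedness of $\Delta^{M-1}$ make all implicit constants depend only on $\|A\|_\infty$ and $M$, not on $\hat\lambda$ or $h$.
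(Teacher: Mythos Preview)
Your proposal is correct and follows essentially the same route as the paper's proof: condition on $S(t_h)=\hat S$, collapse the double sum via $\sum_j\hat\lambda_{u_j}=1$ and the definition of $\overline f$, extract the factor $w_k$ from $f_{u_i}-\overline f$, expand $\pi_{u_i}$ to separate the $\O(1/N_k)$ correction, and use $1/\overline f = 1+\O(w_k)$ to isolate the two error terms. The paper merely writes the two remainders out explicitly as $R_{1,k}$ and $R_{2,k}$ where you use big-$\O$ notation directly, but the argument is the same.
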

\begin{proof}
  By the law of total probability:
  \[ \label{eq:expectation_first_term_law_total_probability}
  \E\big[\D \psi(\lambda^k(t_h)) [ \lambda^k(t_{h+1}) - \lambda^k(t_h)] \big] = \sum_{\hat S} \E\big[ \D \psi(\hat \lambda) [ \lambda^k(t_{h+1}) - \hat \lambda ] \ \big|\ S(t_h) = \hat S \big] \P\big(S(t_h) = \hat S\big) \, . 
  \]
  Then, by~\eqref{eq:transition_probabilities}, we obtain that
  \[ \label{eq:expectation_first_term}
  \begin{split}
    & \E\big[ \D \psi(\hat \lambda) \big[ \lambda^k(t_{h+1}) - \hat \lambda \big] \ \big|\ S(t_h) = \hat S \big] \\
    & \quad = \sum_{i=1}^M \sum_{j=1}^M \D \psi(\hat \lambda) \Big[ \frac{1}{N_k}(e_i - e_j) \Big] \P\Big( \lambda^k(t_{h+1}) = \hat \lambda + \frac{1}{N_k}(e_i - e_j) \ \big|\ S(t_h) = \hat S \Big) \\
    & \quad = \frac{1}{N_k}\sum_{i=1}^M \sum_{j=1}^M \big( \de_i \psi(\hat \lambda) - \de_j \psi(\hat \lambda) \big) \frac{1}{\overline f(\hat \lambda)} \hat \lambda_{u_i} f_{u_i}(\hat \lambda) \hat \lambda_{u_j} \, . 
  \end{split}
  \]
  Using the fact that $\sum_{j=1}^M \hat \lambda_{u_j} = 1$, we infer that
  \[
    \frac{1}{N_k}\sum_{i=1}^M \sum_{j=1}^M  \de_i \psi(\hat \lambda)  \frac{1}{\overline f(\hat \lambda)} \hat \lambda_{u_i} f_{u_i}(\hat \lambda) \hat \lambda_{u_j} = \frac{1}{N_k} \sum_{i=1}^M \de_i \psi(\hat \lambda) \frac{1}{\overline f(\hat \lambda)} \hat \lambda_{u_i} f_{u_i}(\hat \lambda)  \, , 
  \]
  while, by the definition of $\overline f(\hat \lambda)$ in~\eqref{eq:average_fitness}, we have that
  \[
  \begin{split}
    \frac{1}{N_k}\sum_{i=1}^M \sum_{j=1}^M  \de_j \psi(\hat \lambda)  \frac{1}{\overline f(\hat \lambda)} \hat \lambda_{u_i} f_{u_i}(\hat \lambda) \hat \lambda_{u_j} & = \frac{1}{N_k}\sum_{j=1}^M \de_j \psi(\hat \lambda) \hat \lambda_{u_j}  \sum_{i=1}^M  \frac{1}{\overline f(\hat \lambda)} \hat \lambda_{u_i} f_{u_i}(\hat \lambda)  = \frac{1}{N_k} \sum_{j=1}^M \de_j \psi(\hat \lambda)  \hat \lambda_{u_j} \\
    & = \frac{1}{N_k} \sum_{i=1}^M \de_i \psi(\hat \lambda)  \hat \lambda_{u_i}   \, .
  \end{split}
    \]
  Inserting these two expressions in~\eqref{eq:expectation_first_term}, we deduce that
  \[ \label{eq:expectation_first_term_final}
  \begin{split}
    \E\big[\D \psi(\hat \lambda) [ \lambda^k(t_{h+1}) - \hat \lambda ] \ \big|\ S(t_h) = \hat S \big] & = \frac{1}{N_k} \sum_{i=1}^M \de_i \psi(\hat \lambda) \frac{1}{\overline f(\hat \lambda)}  \hat \lambda_{u_i} f_{u_i}(\hat \lambda) - \frac{1}{N_k} \sum_{i=1}^M \de_i \psi(\hat \lambda) \hat \lambda_{u_i} \\
    & = \frac{1}{N_k} \sum_{i=1}^M \de_i \psi(\hat \lambda) \frac{1}{\overline f(\hat \lambda)}  \hat \lambda_{u_i} (f_{u_i}(\hat \lambda) - \overline f(\hat \lambda)) \, .
  \end{split}
  \]
  On the one hand, by~\eqref{eq:average_fitness}, \eqref{eq:fitness}, and~\eqref{eq:average_payoff} we have that 
  \[ \label{eq:average_fitness_explicit}
  \begin{split}
    \overline f(\hat \lambda) & = \sum_{\ell=1}^M \hat \lambda_{u_\ell} f_{u_\ell}(\hat \lambda) = \sum_{\ell=1}^M \hat \lambda_{u_\ell} (1-w_k + w_k \pi_{u_\ell}(\hat \lambda)) = 1 - w_k + w_k \sum_{\ell=1}^M \hat \lambda_{u_\ell} \pi_{u_\ell}(\hat \lambda) \\
    & = 1 - w_k + w_k \sum_{\ell=1}^M \hat \lambda_{u_\ell} \Big( \frac{N_k}{N_k-1} (A \hat \lambda)_\ell - \frac{1}{N_k - 1} a_{\ell \ell} \Big) \\
    & = 1 - w_k + w_k \frac{N_k}{N_k - 1} (A \hat \lambda) \cdot \hat \lambda -  \frac{w_k}{N_k - 1} \diag(A) \cdot \hat \lambda \, ,
  \end{split}
  \]
  where $\diag(A) \in \R^M$ denotes the vector with the diagonal elements of $A$. 
  On the other hand, by~\eqref{eq:fitness}
  \[
  f_{u_i}(\hat \lambda) = 1 - w_k + w_k \pi_{u_i}(\hat \lambda) = 1 - w_k + w_k   \frac{N_k}{N_k-1} (A \hat \lambda)_i - \frac{w_k}{N_k - 1} a_{ii}  \, .
  \]
  We substitute these expressions in~\eqref{eq:expectation_first_term_final} to obtain that
  \[ \label{eq:expectation_first_term_final_final}
  \begin{split}
    \E\big[\D \psi(\hat \lambda) [ \lambda^k(t_{h+1}) - \hat \lambda ] \ \big|\ S(t_h) = \hat S \big] & = \frac{w_k}{N_k} \frac{N_k}{N_k-1} \frac{1}{\overline f(\hat \lambda)} \sum_{i=1}^M \de_i \psi(\hat \lambda) \hat \lambda_{u_i} \Big( (A \hat \lambda)_i - (A \hat \lambda) \cdot \hat \lambda \Big) \\
    & \quad + \frac{w_k}{N_k} \frac{1}{N_k - 1} \frac{1}{\overline f(\hat \lambda)} \sum_{i=1}^M \de_i \psi(\hat \lambda) \hat \lambda_{u_i} \Big( \diag(A) \cdot \hat \lambda - a_{ii} \Big) \\
    &  = \frac{w_k}{N_k} \frac{1}{\overline f(\hat \lambda)} \sum_{i=1}^M \de_i \psi(\hat \lambda) b_i(\hat \lambda)  + \frac{w_k}{N_k} R_{1,k}(\hat \lambda) \\
    &  = \frac{w_k}{N_k} \frac{1}{\overline f(\hat \lambda)} \D \psi(\hat \lambda) b(\hat \lambda)  + \frac{w_k}{N_k} R_{1,k}(\hat \lambda) \, ,
  \end{split}
  \]
  where 
  \[ \label{eq:remainder_first_term}
  R_{1,k}(\hat \lambda) = \frac{1}{N_k - 1} \frac{1}{\overline f(\hat \lambda)} \sum_{i=1}^M \de_i \psi(\hat \lambda) \hat \lambda_{u_i} \Big( (A \hat \lambda)_i - (A \hat \lambda) \cdot \hat \lambda+\diag(A) \cdot \hat \lambda - a_{ii} \Big) \, .
  \]
  Let us estimate $R_{1,k}$. 
  To bound $\frac{1}{\overline f(\hat \lambda)}$, we set
  \[
  g_k(\hat \lambda) := -1 + \frac{N_k}{N_k - 1} (A \hat \lambda) \cdot \hat \lambda -  \frac{1}{N_k - 1} \diag(A) \cdot \hat \lambda \, .
  \]
  We start by observing that
  \[ \label{eq:average_fitness_g}
  |g_k(\hat \lambda)| \leq c_0 \, , \quad \text{for every } \hat \lambda \in \Delta^{M-1} \, , \ k \geq 1 \, ,
  \]
  for some constant $c_0 > 0$ depending on $A$.
  This implies that
  \[ \label{eq:average_fitness_inverse_bound}
  \frac{1}{\overline f(\hat \lambda)} \leq c_1 \, , \quad \text{for every } \hat \lambda \in \Delta^{M-1} \, , \ k \text{ large enough} \, ,
  \]
  for some constant $c_1 > 0$, where $k \geq 1$ large enough means, \eg, $w_k \leq \frac{1}{2c_0}$.

  We use~\eqref{eq:average_fitness_inverse_bound} to estimate the remainder $R_{1,k}$ in~\eqref{eq:remainder_first_term} by 
  \[ \label{eq:remainder_1_first_term_bound}
  |R_{1,k}(\hat \lambda)| \leq C_1 \|\D\psi\|_{L^\infty} \frac{1}{N_k} \, , \quad \text{for every } \hat \lambda \in \Delta^{M-1} \, , \ k \text{ large enough} \, ,
  \]
  where the constant $C_1 > 0$ depends on $A$ and $c_1$.

  We recast the term $\frac{1}{\overline f(\hat \lambda)}$ in~\eqref{eq:expectation_first_term_final_final} in a more convenient form.
  Exploiting once more~\eqref{eq:average_fitness_g}, from
  \[
    \frac{1}{\overline f(\hat \lambda)} = \frac{1}{1 + w_k g_k(\hat \lambda)} = 1 -  \frac{w_k g_k(\hat \lambda)}{1 + w_k g_k(\hat \lambda)} \, ,
  \]
  we also deduce that 
  \[ \label{eq:average_fitness_inverse_approximation}
  \Big| \frac{1}{\overline f(\hat \lambda)} - 1 \Big| \leq  \frac{w_k g_k(\hat \lambda)}{|1 + w_k g_k(\hat \lambda)|} \leq c_2 w_k \, , \quad \text{for every } \hat \lambda \in \Delta^{M-1} \, ,  \ k \text{ large enough} \, ,
  \]
  for some constant $c_2 > 0$, where $k \geq 1$ large enough means, \eg, $w_k \leq \frac{1}{2c_0}$.

  Then we recast~\eqref{eq:expectation_first_term_final_final} as
  \[ \label{eq:expectation_first_term_final_final_final}
    \E\big[\D \psi(\hat \lambda) [ \lambda^k(t_{h+1}) - \hat \lambda ] \ \big|\ S(t_h) = \hat S \big] = \frac{w_k}{N_k}\Big(  \D \psi(\hat \lambda) b(\hat \lambda)  +   R_{1,k}(\hat \lambda) +   R_{2,k}(\hat \lambda) \Big) \, ,
  \]
  where
  \[ \label{eq:remainder_first_term_final}
  R_{2,k}(\hat \lambda) =  \Big( \frac{1}{\overline f(\hat \lambda)} - 1 \Big) \sum_{i=1}^M \de_i \psi(\hat \lambda) b_i(\hat \lambda)  \, .
  \]
  By~\eqref{eq:average_fitness_inverse_approximation}, we estimate the remainder $R_{2,k}$ by 
  \[   \label{eq:remainder_2_first_term_bound}
  |R_{2,k}(\hat \lambda)| \leq C_2 \|\D\psi\|_{L^\infty} w_k \, , \quad \text{for every } \hat \lambda \in \Delta^{M-1} \, , \ k \text{ large enough} \, ,
  \]
  where the constant $C_2 > 0$ depends on $A$ and $c_2$.

  We have concluded the estimate.
  Indeed, by~\eqref{eq:expectation_first_term_law_total_probability}, \eqref{eq:expectation_first_term_final_final_final}, \eqref{eq:remainder_1_first_term_bound}, and~\eqref{eq:remainder_2_first_term_bound}, we deduce that
  \[  
  \begin{split}
    & \E\big[\D \psi(\lambda^k(t_h)) [ \lambda^k(t_{h+1}) - \lambda^k(t_h)] \big] = \frac{w_k}{N_k} \sum_{\hat S} \Big(  \D \psi(\hat \lambda) b(\hat \lambda)  +   R_{1,k}(\hat \lambda) +   R_{2,k}(\hat \lambda) \Big) \P\big(S(t_h) = \hat S\big)  \\
    & \quad  = \frac{w_k}{N_k} \Big(  \E\big[ \D \psi(\lambda^k(t_{h})) b(\lambda^k(t_h)) \big] + \E\big[ R_{1,k}(\lambda^k(t_h)) \big] + \E\big[ R_{2,k}(\lambda^k(t_h)) \big] \Big) \\ 
    & \quad = \frac{w_k}{N_k} \Big(  \E\big[ \D \psi(\lambda^k(t_{h})) b(\lambda^k(t_h)) \big] + \O\Big(\frac{\|\D\psi\|_{L^\infty}}{N_k}\Big) + \O\big(\|\D\psi\|_{L^\infty}w_k\big) \Big) \, .
  \end{split}
  \]
  We note that
  \[
  \begin{split}
    \E\big[ \D \psi(\lambda^k(t_{h})) b(\lambda^k(t_h)) \big] & = \int_{\Omega} \D \psi(\lambda^k(t_h)(\omega)) b(\lambda^k(t_h)(\omega)) \, \d \P(\omega) = \int_{\Delta^{M-1}} \D \psi(\lambda) b(\lambda) \, \d \lambda^k(t_h)_\# \P(\lambda) \\
    & = \int_{\Delta^{M-1}} \D \psi(\lambda) b(\lambda) \, \d \Lambda^k_{t_h}(\lambda) \, ,
  \end{split}
  \]
  hence 
  \[ \label{eq:first_term}
  \begin{split}
    \E\big[\D \psi(\lambda^k(t_h)) \big[ \lambda^k(t_{h+1}) - \lambda^k(t_h)\big] \big] & = \frac{w_k}{N_k}\Big( \int_{\Delta^{M-1}}   \D \psi(\lambda) b(\lambda) \, \d \Lambda^k_{t_h}(\lambda) \\ 
    & \hphantom{\frac{w_k}{N_k}\Big(\int_{\Delta^{M-1}}}+ \O\Big(\frac{\|\D\psi\|_{L^\infty}}{N_k}\Big) + \O\big(\|\D\psi\|_{L^\infty}w_k\big) \Big) \, .
  \end{split} 
  \]
  This concludes the proof of the lemma.
\end{proof}

We are now in a position to prove Proposition~\ref{prop:approximate_distributional_solution}.

\begin{proof}[Proof of Proposition~\ref{prop:approximate_distributional_solution}]
  Let us fix $\phi \in C^\infty_c((-\infty,T) \times \R^M)$. 
  By~\eqref{eq:piecewise_affine_interpolation}, for a.e.\ realization $\omega \in \Omega$, we have that
  \[ \label{eq:derivative_phi}
  \begin{split}
    \frac{\d}{\d t} \big( \phi(t,\lambda^k(t)(\omega)) \big) & = \de_t \phi(t,\lambda^k(t)(\omega)) + \D \phi(t,\lambda^k(t)(\omega)) \Big[ \frac{\d}{\d t} \lambda^k(t)(\omega) \Big] \\
    & = \de_t \phi(t,\lambda^k(t)(\omega)) + \D \phi(t,\lambda^k(t)(\omega)) \Big[ \frac{\lambda^k(t_{h+1})(\omega) - \lambda^k(t_h)(\omega)}{\tau_k} \Big]
  \end{split}
  \]
  for every $t \in (t_h,t_{h+1})$.
  We take the expectation and we integrate in $t$ the above equality.
  The left-hand side in~\eqref{eq:derivative_phi} reads
  \[  \label{eq:derivative_phi_left}
  \begin{split}
    \int_{t_h}^{t_{h+1}} \frac{\d}{\d t} \int_{\Omega} \phi(t,\lambda^k(t)(\omega)) \, \d \P(\omega) \, \d t & = \int_{\Omega} \phi(t_{h+1},\lambda^k(t_{h+1})(\omega)) \, \d \P(\omega) - \int_{\Omega} \phi(0,\lambda^k(0)(\omega)) \, \d \P(\omega) \\
    & = \int_{\Delta^{M-1}} \phi(t_{h+1},\lambda) \, \d \Lambda^k_{t_{h+1}}(\lambda) - \int_{\Delta^{M-1}} \phi(t_h,\lambda) \, \d \Lambda^k_{t_h}(\lambda) \, .
  \end{split}
  \]
  The right-hand side in~\eqref{eq:derivative_phi} reads
  \[ \label{eq:derivative_phi_right}
  \begin{split}
    & \int_{t_h}^{t_{h+1}} \int_{\Omega} \de_t \phi(t,\lambda^k(t)(\omega)) \, \d \P(\omega) \, \d t + \int_{t_h}^{t_{h+1}} \int_{\Omega} \D \phi(t,\lambda^k(t)(\omega)) \Big[ \frac{\lambda^k(t_{h+1})(\omega) - \lambda^k(t_h)(\omega)}{\tau_k} \Big] \, \d \P(\omega) \, \d t \\
    & = \int_{t_h}^{t_{h+1}} \int_{\Delta^{M-1}} \de_t \phi(t,\lambda) \, \d \Lambda^k_t(\lambda) \, \d t + \frac{1}{\tau_k}\int_{t_h}^{t_{h+1}}  \E \big[ \D \phi(t,\lambda^k(t)) \big[  \lambda^k(t_{h+1}) - \lambda^k(t_h) \big] \big]   \, \d t \, .
  \end{split}
  \]
  We now estimate the term $\E \big[ \D \phi(t,\lambda^k(t)) \big[  \lambda^k(t_{h+1}) - \lambda^k(t_h) \big] \big]$.
  To do so, we use \eqref{eq:piecewise_affine_interpolation} and Taylor's formula to write a.s.\ in $\Omega$
  \[ \label{eq:Taylor_derivative}
  \begin{split}
    & \D \phi(t,\lambda^k(t))\big[  \lambda^k(t_{h+1}) - \lambda^k(t_h) \big] = \D \phi\Big(t,\lambda^k(t_h) +  (\lambda^k(t) - \lambda^k(t_h))) \Big)\big[  \lambda^k(t_{h+1}) - \lambda^k(t_h) \big]  \\
    & \quad =  \D \phi(t,\lambda^k(t_h))\big[  \lambda^k(t_{h+1}) - \lambda^k(t_h) \big] + R(t,\lambda^k(t_h),\lambda^k(t_{h+1})) \, ,
  \end{split}
  \]
  where 
  \[
  \begin{split}
    & R(t,\lambda^k(t_h),\lambda^k(t_{h+1})) \\
    &  = \frac{t-t_h}{\tau_k}   \int_0^1  \D^2 \phi\Big(t,\lambda^k(t_h) + r  \big(\lambda^k(t) - \lambda^k(t_h) \big) \Big)  \big[\lambda^k(t_{h+1}) - \lambda^k(t_h)), \lambda^k(t_{h+1}            ) - \lambda^k(t_h)  \big] \, \d r   . 
  \end{split}
  \]
  Since $t$ is fixed, we apply Lemma~\ref{lem:E_psi} to $\psi = \phi(t,\cdot)$ to get that 
  \[\label{eq:expectation_derivative_estimate}
  \begin{split}
    & \E\big[ \D\phi(t,\lambda^k(t_h))[\lambda^k(t_{h+1}) - \lambda^k(t_h)]\big] \\
    & \quad = \frac{w_k}{N_k}   \int_{\Delta^{M-1}} \D \phi(t,\lambda) b(\lambda) \, \d \Lambda^k_{t_h}(\lambda) + \O\Big( \frac{w_k}{N_k^2}  \|\D\psi\|_\infty \Big) + \O\Big( \frac{w_k^2}{N_k}  \|\D \psi\|_\infty  \Big) \, .
  \end{split}
  \]
  Moreover, we use the law of total probability to estimate 
  \[
    \E\big[R(t,\lambda^k(t_h),\lambda^k(t_{h+1})) \big] = \sum_{\hat S} \E\big[ R(t,\hat \lambda,\lambda^k(t_{h+1})) \ \big|\ S(t_h) = \hat S \big] \P\big(S(t_h) = \hat S\big) \, ,
  \]
  and 
  \[
  \E\big[R(t,\hat \lambda,\lambda^k(t_{h+1}))  \ \big|\ S(t_h) = \hat S \big] \leq \| \D^2 \phi\|_{L^\infty} \frac{2}{N_k^2} = \O\Big(\frac{\|\D^2 \phi\|_{L^\infty}}{N_k^2}\Big) \, ,
  \]
  from which we deduce that
  \[ \label{eq:expectation_remainder_estimate}
  \E\big[R(t,\lambda^k(t_h),\lambda^k(t_{h+1})) \big] = \O\Big(\frac{\|\D^2 \phi\|_{L^\infty}}{N_k^2}\Big) \, .
  \]

  Combining~\eqref{eq:derivative_phi}--\eqref{eq:expectation_remainder_estimate}, we infer that
  \[ \label{eq:derivative_phi_final}
  \begin{split}
    & \int_{\Delta^{M-1}} \phi(t_{h+1},\lambda) \, \d \Lambda^k_{t_{h+1}}(\lambda) - \int_{\Delta^{M-1}} \phi(t_h,\lambda) \, \d \Lambda^k_{t_h}(\lambda) = \\
    & \quad =  \int_{t_h}^{t_{h+1}} \int_{\Delta^{M-1}} \de_t \phi(t,\lambda) \, \d \Lambda^k_t(\lambda) \, \d t + \frac{w_k}{\tau_k N_k}  \int_{t_h}^{t_{h+1}} \int_{\Delta^{M-1}} \D \phi(t,\lambda) b(\lambda) \, \d \Lambda^k_{t_h}(\lambda) \\
    & \quad \quad + \O\Big( \frac{w_k}{N_k^2}  \|\D\psi\|_\infty \Big) + \O\Big( \frac{w_k^2}{N_k}  \|\D \psi\|_\infty  \Big)  + \O\Big(\frac{1}{N_k^2} \|\D^2 \phi\|_{L^\infty}\Big) \, .
  \end{split}
  \]
  Summing the above equality over $h = 0, \dots, k-1$, we obtain that
  \[ \label{eq:derivative_phi_final_final}
  \begin{split}
    - \int_{\Delta^{M-1}} \phi(0,\lambda) \, \d \Lambda^k_{0}(\lambda) = & \int_{\Delta^{M-1}} \phi(T,\lambda) \, \d \Lambda^k_{T}(\lambda) - \int_{\Delta^{M-1}} \phi(0,\lambda) \, \d \Lambda^k_{0}(\lambda) = \\
    & =  \int_{0}^{T} \int_{\Delta^{M-1}} \de_t \phi(t,\lambda) \, \d \Lambda^k_t(\lambda) \, \d t + \frac{w_k}{\tau_k N_k}   \int_{\Delta^{M-1}} \D \phi(t,\lambda) b(\lambda) \, \d \overline \Lambda^k_{t}(\lambda) \\
    & \quad + \O\Big(\frac{w_k}{\tau_k N_k^2}   \|\D\psi\|_\infty \Big) + \O\big(\frac{w_k^2}{\tau_k N_k}  \|\D \psi\|_\infty  \big)  + \O\Big( \frac{1}{\tau_k N_k^2}  \|\D^2 \phi\|_{L^\infty} \Big) \, .
  \end{split}
  \]
  This concludes the proof of the proposition.

\end{proof}

\section{Continuous-time large population limit}

We are in a position to prove the main result in this paper, in which we characterize the limit of the measures $\Lambda^k$ as $k \to +\infty$.

First of all, we need to establish a compactness result for the sequence $\Lambda^k$.
For this, we start with a preliminary observation.

\begin{remark} \label{rem:compactness_preliminaries}
  Let $\Lambda^k_t$ be as in~\eqref{eq:Lambda_k_t}.
  We have that:
  \begin{enumerate}[label={\itshape (\roman*)}]
    \item \label{item:values_in_P1} $\Lambda^k_t \in \Prob_1(\Delta^{M-1})$ for every $t \in [0,T]$;
    \item \label{item:P1_compact} $(\Prob_1(\Delta^{M-1}), \W_1)$ is a compact metric space.
  \end{enumerate}
For~\ref{item:values_in_P1}, we fix $e_1 \in \Delta^{M-1}$ and we obtain, by boundedness of $\Delta^{M-1}$, that
  \[
  \int_{\Delta^{M-1}} |e_1-\lambda| \, \d \Lambda^k_t(\lambda) \leq C \int_{\Delta^{M-1}} \d \Lambda^k_t(\lambda) \leq C \, ,
  \]
  for some constant $C > 0$. 
  In fact, the estimate is uniform in $t \in [0,T]$, $N$, and $\tau_k$.

  Fact~\ref{item:P1_compact} is a known result, since $\Delta^{M-1}$ is compact. 
  See, \eg, \cite[Remark~6.19]{Vil08}.
\end{remark}

\begin{remark} \label{rem:nontrivial_compactness}
  If the curves $t \in [0,T] \mapsto \Lambda^k_t \in \Prob_1(\Delta^{M-1})$ were equicontinuous (for example if they were equi-Lipschitz continuous), then one could apply Arzelà--Ascoli's theorem to extract a subsequence converging to a continuous curve $t \in [0,T] \mapsto \Lambda_t \in \Prob_1(\Delta^{M-1})$.
  However, straightforward estimates coming from the discrete stochastic process do not seem to provide equicontinuity.
  Indeed, notice that we can obtain the rough estimate 
  \[ \label{eq:bad_Lipschitz_estimate_lambda}
  \big|\lambda^k(t_{h+1}) - \lambda^k(t_h) \big| \leq \frac{\sqrt{2}}{N_k} \, , \quad \text{almost surely.}  
  \]
  From this, we deduce that for every Lipschitz continuous function~$\psi$ with Lipschitz constant $\leq1$, we have that
\[
\begin{split}
  \int_{\Delta^{M-1}} \psi(\lambda) \, \d \big( \Lambda^k_{t_{h+1}} - \Lambda^k_{t_{h}})(\lambda) & = \int_{\Omega} \psi(\lambda^k(t_{h+1})) - \psi(\lambda^k(t_h)) \, \d \P  = \E\Big[ \psi(\lambda^k(t_{h+1})) - \psi(\lambda^k(t_h)) \Big] \\
  & \leq \E\Big[ \big| \lambda^k(t_{h+1}) - \lambda^k(t_h) \big| \Big] \leq \frac{\sqrt{2}}{N_k} = \sqrt{2} \tau_k^\alpha \, .
\end{split}
  \]
 Owing to Kantorovich's duality formula, we obtain that
 \[
\W_1(\Lambda^k_{t_{h+1}}, \Lambda^k_{t_h}) \leq \sqrt{2} \tau_k^\alpha \, .
\]
It seems that this estimate is not enough to deduce equicontinuity. 
Note that, iterating it yields
\[
\W_1(\Lambda^k_{t}, \Lambda^k_{s}) \leq C  \frac{|t-s|}{\tau_k } \frac{1}{N_k} \leq C |t-s| \frac{1}{\tau_k^{1-\alpha}}  \, ,
\]
an evidently bad estimate for equicontinuity purposes.

In the proof of the next theorem we avoid using~\eqref{eq:bad_Lipschitz_estimate_lambda}, replacing it with~\eqref{eq:compactness_estimate_final_final} below. 
However, this requires to provide a proof in the spirit of Arzelà--Ascoli's compactness result from scratch.
\end{remark}

\begin{theorem} \label{thm:properties_of_Lambda}
  Let $N_k \sim \tau_k^{-\alpha}$, $w_k = \tau_k^\beta$ and assume that $\alpha, \beta > 0$ and $\alpha + \beta = 1$, $\alpha > \frac{1}{2}$.
  Let $\Lambda^k_t$ be as in~\eqref{eq:Lambda_k_t} and $\overline \Lambda^k_t$ be as in~\eqref{eq:Lambda_bar_k_t}.  
  Then there exists a subsequence independent of $t$ (that we do not relabel) and a Lipschitz curve $t \in [0,T] \mapsto \Lambda_t \in \Prob_1(\Delta^{M-1})$ such that $\W_1(\Lambda^k_t,\Lambda_t) \to 0$ as $k \to +\infty$ for every $t \in [0,T]$.
\end{theorem}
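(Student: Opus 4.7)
The plan is to establish a refined modulus-of-continuity estimate of the form
\[
\W_1(\Lambda^k_t, \Lambda^k_s) \leq C|t-s| + \eta_k \quad \text{for all } s,t \in [0,T], \text{ with } \eta_k \to 0,
\]
and then combine it with the compactness of $(\Prob_1(\Delta^{M-1}), \W_1)$ recorded in Remark~\ref{rem:compactness_preliminaries}\ref{item:P1_compact} to extract a limit via a hand-made Arzel\`a--Ascoli argument. Although this bound is strictly weaker than equi-Lipschitz continuity because of the additive $\eta_k$, the naive iteration of the pointwise jump bound~\eqref{eq:bad_Lipschitz_estimate_lambda} does not deliver even this weaker estimate, which is why one must go through the differential structure encoded in Lemma~\ref{lem:E_psi}.

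For the estimate itself, the idea is to fix a $1$-Lipschitz function $\psi \colon \Delta^{M-1} \to \R$, approximate it by a mollification $\psi_\e \in C^\infty$ satisfying $\|\psi-\psi_\e\|_\infty \leq C\e$, $\|\D\psi_\e\|_\infty \leq C$, and $\|\D^2\psi_\e\|_\infty \leq C/\e$, and then to telescope
\[
\E[\psi_\e(\lambda^k(t_{h'})) - \psi_\e(\lambda^k(t_h))] = \sum_{j=h}^{h'-1} \E[\psi_\e(\lambda^k(t_{j+1})) - \psi_\e(\lambda^k(t_j))],
\]
controlling each summand by a second-order Taylor expansion combined with Lemma~\ref{lem:E_psi}. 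The leading term is bounded by $(w_k/N_k) \|b\|_\infty \|\D\psi_\e\|_\infty = \O(\tau_k)$; the Lemma~\ref{lem:E_psi} remainder contributes $\O(\tau_k^{1+\alpha}) + \O(\tau_k^{1+\beta})$; and the quadratic Taylor remainder, estimated via the a.s.\ jump bound in~\eqref{eq:bad_Lipschitz_estimate_lambda} inside the second-order integrand, contributes $\O(\tau_k^{2\alpha}/\e)$. Summing over the $(h'-h) = |t_{h'}-t_h|/\tau_k$ summands and adding the mollification defect $2\e$ gives
\[
\int \psi \, \d(\Lambda^k_{t_{h'}} - \Lambda^k_{t_h}) \leq C|t_{h'}-t_h|\big(1 + \tau_k^\alpha + \tau_k^\beta + \tau_k^{2\alpha-1}/\e\big) + 2\e.
\]
The choice $\e = \tau_k^{(2\alpha-1)/2}$, admissible exactly because of the standing hypothesis $\alpha > \tfrac{1}{2}$, kills both the last parenthetical summand and $\e$ as $k \to \infty$. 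Taking the supremum over $1$-Lipschitz $\psi$, invoking Kantorovich's duality, and extending from discrete to continuous times via Lemma~\ref{lem:Lambda_k_Lambda_bar} will yield the announced estimate.

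With this bound in hand, the rest is a hand-made Arzel\`a--Ascoli: pick a countable dense subset $Q \subset [0,T]$, use the compactness of $(\Prob_1(\Delta^{M-1}), \W_1)$ together with a Cantor diagonal argument to extract a subsequence along which $\Lambda^k_q$ converges in $\W_1$ to some $\Lambda_q \in \Prob_1(\Delta^{M-1})$ for every $q \in Q$, pass to the limit in the refined estimate restricted to $Q$ to get $\W_1(\Lambda_q, \Lambda_{q'}) \leq C|q-q'|$, and uniquely extend $\{\Lambda_q\}_{q \in Q}$ to a Lipschitz curve $t \mapsto \Lambda_t$ on $[0,T]$. Pointwise convergence on all of $[0,T]$ will then follow from the triangle inequality
\[
\W_1(\Lambda^k_t, \Lambda_t) \leq \W_1(\Lambda^k_t, \Lambda^k_q) + \W_1(\Lambda^k_q, \Lambda_q) + \W_1(\Lambda_q, \Lambda_t)
\]
by choosing $q \in Q$ close to $t$ and sending $k \to \infty$. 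The principal difficulty of the whole scheme is the derivation of the refined estimate: one must sacrifice the smoothness of the test function to mollification, carefully balance the regularization scale $\e$ against the discrete time step $\tau_k$, and use the precise scaling $\alpha > \tfrac{1}{2}$ to ensure that the second-order errors accumulated over $\sim 1/\tau_k$ discrete steps remain negligible.
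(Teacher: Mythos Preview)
Your proposal is correct and follows essentially the same architecture as the paper: derive a refined two-point estimate using Lemma~\ref{lem:E_psi} plus a second-order Taylor remainder, then run a hand-made Arzel\`a--Ascoli argument on a countable dense set. The one technical difference is the order in which you mollify and pass to the limit. You mollify a $1$-Lipschitz $\psi$ first with a $k$-dependent scale $\e_k = \tau_k^{(2\alpha-1)/2}$, then take the supremum to obtain a genuine $\W_1$ bound of the form $\W_1(\Lambda^k_t,\Lambda^k_s) \leq C|t-s| + \eta_k$ at each fixed $k$; only afterwards do you send $k\to\infty$. The paper instead keeps $\psi \in C^\infty_c$ throughout, obtains the estimate~\eqref{eq:compactness_estimate_final_final} involving both $\|\D\psi\|_\infty$ and $\|\D^2\psi\|_\infty$, passes to the limit $k\to\infty$ on the dense set first (which kills the $\|\D^2\psi\|_\infty$ term since $1/(\tau_k N_k^2) \sim \tau_k^{2\alpha-1}\to 0$), and only then approximates $1$-Lipschitz functions by smooth ones in the resulting limit inequality, where no second-derivative term survives. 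Your route yields the small bonus of a quantitative $\W_1$ modulus at fixed $k$, at the price of the extra optimization in $\e$; the paper's route sidesteps that optimization entirely by separating the two limits.
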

\begin{proof}
  Before proving the compactness result, we need to establish a crucial estimate, \ie, \eqref{eq:compactness_estimate_final_final} below.
  We begin with an estimate for $s \leq t$ satisfying $s,t \in [t_h,t_{h+1}]$ for some $h \in \{0, \dots, k-1\}$.
  Let us fix $\psi \in C^\infty_c(\R^M)$.
  By the definition of $\Lambda^k$, we get that
  \[  \label{eq:integral_against_psi}
  \begin{split}
    & \int_{\Delta^{M-1}} \psi(\lambda) \, \d \Lambda^k_t(\lambda) - \int_{\Delta^{M-1}} \psi(\lambda) \, \d \Lambda^k_s(\lambda) = \int_{\Delta^{M-1}} \psi(\lambda) \, \d \big( \Lambda^k_t - \Lambda^k_s \big)(\lambda) \\
    & \quad = \int_{\Delta^{M-1}} \psi(\lambda) \, \d \big( \lambda^k(t) - \lambda^k(s) \big)_\# \P(\lambda) = \int_\Omega \big( \psi(\lambda^k(t)(\omega)) - \psi(\lambda^k(s)(\omega)) \big) \, \d \P(\omega) \\
    & \quad  = \E\big[ \psi(\lambda^k(t)) - \psi(\lambda^k(s)) \big]     \, .
  \end{split}
  \] 
  By Taylor's formula, we have that
  \[ \label{eq:taylor_formula_compactness}
  \begin{split}
    \psi(\lambda^k(t)) - \psi(\lambda^k(t_h)) & = \D \psi(\lambda^k(t_h)) \big[ \lambda^k(t) - \lambda^k(t_h)\big] + R(\lambda^k(t_h),\lambda^k(t)) \, , \\
    \psi(\lambda^k(s)) - \psi(\lambda^k(t_h)) & = \D \psi(\lambda^k(t_h)) \big[ \lambda^k(s) - \lambda^k(t_h)\big] + R(\lambda^k(t_h),\lambda^k(s)) \, ,
  \end{split}
  \]
  where the remainder is given by
  \[
  R(\lambda^k(t_h),\lambda^k(t)) = \! \int_0^1 (1-r) \D^2 \psi\big( \lambda^k(t_h) + r (\lambda^k(t)  - \lambda^k(t_h))\big) \big[ \lambda^k(t) - \lambda^k(t_h) ,  \lambda^k(t)  - \lambda^k(t_h) \big] \, \d r \, ,
  \]
  with the same formula with $s$ in place of $t$ for $R(\lambda^k(t_h),\lambda^k(s))$.
  Subtraction of the two equations in~\eqref{eq:taylor_formula_compactness} and~\eqref{eq:piecewise_affine_interpolation} yields
  \[   \label{eq:psi_t_psi_s}
  \begin{split}
    \psi(\lambda^k(t)) - \psi(\lambda^k(s)) & = \D \psi(\lambda^k(t_h)) \big[ \lambda^k(t) - \lambda^k(s)\big] + R(\lambda^k(t_h),\lambda^k(t)) - R(\lambda^k(t_h),\lambda^k(s)) \\ 
    & = \frac{t-s}{\tau_k} \D \psi(\lambda^k(t_h)) \big[   \lambda^k(t_{h + 1}) - \lambda^k(t_h)  \big] + R(\lambda^k(t_h),\lambda^k(t)) - R(\lambda^k(t_h),\lambda^k(s)) \, .
  \end{split}
  \]

  Let us estimate the expectation of $R(\lambda^k(t_h),\lambda^k(t))$ and $R(\lambda^k(t_h),\lambda^k(s))$. 
  We show the estimate for $R(\lambda^k(t_h),\lambda^k(t))$; the estimate for $R(\lambda^k(t_h),\lambda^k(s))$ is completely analogous.
  By the law of total probability, we have that
  \[ \label{eq:expectation_remainder_law_total_probability}
  \E\big[ R(\lambda^k(t_h),\lambda^k(t)) \big] = \sum_{\hat S} \E\big[ R(\hat \lambda, \lambda^k(t)) \ \big|\ S(t_h) = \hat S \big] \P\big(S(t_h) = \hat S\big) \, .
  \]
  Then, using that $\lambda^k(t)  - \hat \lambda = \frac{t-t_{h}}{\tau_k} (\lambda^k(t_{h+1}) - \hat \lambda)$, we estimate 
  \[ \label{eq:expectation_remainder}
  \begin{split}
    & \E\big[ R(\hat \lambda, \lambda^k(t)) \ \big|\ S(t_h) = \hat S \big]\\
    & \quad  = \E\Big[  \int_0^1 (1-s) \D^2 \psi\big( \hat \lambda + s (\lambda^k(t)  - \hat \lambda)\big) \big[ \lambda^k(t) - \hat \lambda ,  \lambda^k(t)  - \hat \lambda \big] \, \d s \ \Big|\ S(t_h) = \hat S \Big] \\
    & \quad = \sum_{i=1}^M \sum_{j=1}^M \int_0^1 (1-r) \D^2 \psi\Big( \hat \lambda + r \frac{t-t_h}{\tau_k}\frac{1}{N_k} (e_i - e_j) \Big) \Big[ \frac{t-t_h}{\tau_k} \frac{1}{N_k} (e_i - e_j) ,  \frac{t-t_h}{\tau_k} \frac{1}{N_k} (e_i - e_j) \Big] \x \\ 
    & \hspace{15em}\x \P\Big( \lambda^k(t_{h+1}) = \hat \lambda + \frac{1}{N_k}(e_i - e_j) \ \big|\ S(t_h) = \hat S \Big) \, \d r \\
    & \quad \leq \sum_{i=1}^M \sum_{j=1}^M \| \D^2 \psi\|_{L^\infty} \frac{2}{N_k^2} \frac{1}{\overline f(\hat \lambda)} \hat \lambda_{u_i} f_{u_i}(\hat \lambda) \hat \lambda_{u_j} = \frac{2}{N_k^2} \| \D^2 \psi\|_{L^\infty} \, .
  \end{split}
  \]
  Inserting this estimate in~\eqref{eq:expectation_remainder_law_total_probability}, we obtain that 
  \[ \label{eq:remainder}
    \E\big[ R(\lambda^k(t_h),\lambda^k(t)) \big]   \leq \frac{2}{N_k^2} \| \D^2 \psi\|_{L^\infty} = \O\Big(\frac{\|\D^2 \psi\|_{L^\infty}}{N_k^2}\Big) \, .
  \]

  We now exploit the estimate obtained in Lemma~\ref{lem:E_psi}.
  Taking the expectation in~\eqref{eq:psi_t_psi_s} and using~\eqref{eq:first_term} and~\eqref{eq:remainder}, we infer that
  \[ \label{eq:compactness_estimate}
  \begin{split}
    \E\big[ \psi(\lambda^k(t)) - \psi(\lambda^k(s)) \big] 
    & = \frac{t-s}{\tau_k} \Big( \frac{w_k}{N_k} \int_{\Delta^{M-1}} \D \psi(\lambda) b(\lambda) \, \d \Lambda^k_{t_h}(\lambda) \\
    & \quad + \frac{w_k}{N_k} \O\Big(\frac{\|\D\psi\|_{L^\infty}}{N_k} \Big) + \frac{w_k}{N_k} \O\big(\|\D\psi\|_{L^\infty} w_k \big) \Big) + \O\Big(\frac{\|\D^2 \psi\|_{L^\infty}}{N_k^2} \Big) \\ 
    & \quad \leq C |t-s| \frac{w_k}{N_k \tau_k} \Big( \|b\|_{L^\infty} + \frac{1}{N_k}   + w_k \Big)  \|\D\psi\|_{L^\infty}   +  C \frac{\|\D^2 \psi\|_{L^\infty}}{N_k^2}   \, .
  \end{split}
  \]

  Let us now fix generic $s \leq t$ in $[0,T]$. 
  Let us denote by $h' \in \{0, \dots, k-1\}$ the index such that $s \in [t_{h'},t_{{h'}+1})$ and by $h'' \in \{h', \dots, k-1\}$ the index such that $t \in [t_{h''},t_{h''+1})$ (if $t = T$, we set $h'' = k-1$).
  We estimate the difference $\E\big[\psi(\lambda^k(t)) - \lambda^k(s)\big]$ by concatenating the local inequality in~\eqref{eq:compactness_estimate} in the intervals $[t_{h'},t_{h'+1}), \ldots, [t_{h''},t_{h''+1})$:
  \[ 
  \begin{split}
    & \E\big[\psi(\lambda^k(t)) - \psi(\lambda^k(s))\big] \\
    & \quad  \leq \E\big[\psi(\lambda^k(t)) - \psi(\lambda^k(t_{h''}))\big] + \sum_{h=h'+1}^{h''-1} \E\big[\psi(\lambda^k(t_{h+1})) - \lambda^k(t_h)\big] + \E\big[\psi(\lambda^k(t_{h'+1})) - \psi(\lambda^k(s))\big] \\
    & \quad \leq  C |t-t_{h''}| \frac{w_k}{N_k \tau_k} \Big(\|b\|_{L^\infty} +  \frac{1}{N_k}   + w_k \Big)  \|\D\psi\|_{L^\infty} +  C \frac{\|\D^2 \psi\|_{L^\infty}}{N_k^2}  \\ 
    & \quad \quad + \sum_{h=h'+1}^{h''-1} \Big( C |t_{h+1}-t_h| \frac{w_k}{N_k \tau_k} \Big( \|b\|_{L^\infty} + \frac{1}{N_k}   + w_k \Big)  \|\D\psi\|_{L^\infty} +  C \frac{\|\D^2 \psi\|_{L^\infty}}{N_k^2} \Big) \\ 
    & \quad \quad + C |t_{h'+1}-s| \frac{w_k}{N_k \tau_k} \Big( \|b\|_{L^\infty} + \frac{1}{N_k}   + w_k \Big)  \|\D\psi\|_{L^\infty} +  C \frac{\|\D^2 \psi\|_{L^\infty}}{N_k^2}  \\
    & \quad \leq  C |t-s| \frac{w_k}{N_k \tau_k} \Big( \|b\|_{L^\infty} + \frac{1}{N_k}   + w_k \Big)  \|\D\psi\|_{L^\infty} +  C (h''-h'+1) \frac{\|\D^2 \psi\|_{L^\infty}}{N_k^2}  \, .
  \end{split}
  \]
  We remark that  
  \[
    h'' \tau_k - h' \tau_k = t_{h''} - t_{h'} \leq t-s + \tau_k \implies h'' - h' + 1 \leq \frac{|t-s|}{\tau_k} + 2 \leq \frac{C}{\tau_k}\, .
  \]
  It follows that
  \[ \label{eq:compactness_estimate_final}
  \E\big[\psi(\lambda^k(t)) - \psi(\lambda^k(s))\big] \leq C |t-s| \frac{w_k}{N_k \tau_k} \Big( \|b\|_{L^\infty} + \frac{1}{N_k}   + w_k \Big)  \|\D\psi\|_{L^\infty} +  C \frac{\|\D^2 \psi\|_{L^\infty}}{\tau_k N_k^2}  \, .
  \]
  Inserting this estimate in~\eqref{eq:integral_against_psi}, we have shown that
  \[ \label{eq:compactness_estimate_final_final}
  \begin{split}
    & \int_{\Delta^{M-1}} \psi(\lambda) \, \d \Lambda^k_t(\lambda) - \int_{\Delta^{M-1}} \psi(\lambda) \, \d \Lambda^k_s(\lambda) \\
    & \quad  \leq C |t-s| \frac{w_k}{N_k \tau_k} \Big( \|b\|_{L^\infty} + \frac{1}{N_k}   + w_k \Big)  \|\D\psi\|_{L^\infty} +  C \frac{\|\D^2 \psi\|_{L^\infty}}{\tau_k N_k^2}  \, ,
  \end{split}
  \]
  for every $s,t \in [0,T]$ and every $\psi \in C^\infty_c(\R^M)$.
 
  We are now in a position to prove the compactness result. 
  Let us fix a countable dense set $D \subset [0,T]$. 
  We exploit the compactness of $(\Prob_1(\Delta^{M-1}), \W_1)$ in Remark~\ref{rem:compactness_preliminaries}--\ref{item:P1_compact} to extract, by a diagonal argument, a subsequence independent of $q \in D$ (that we do not relabel) such that $\Lambda^k_q$ converges in $\W_1$ to some $\Lambda_q \in \Prob_1(\Delta^{M-1})$ for every $q \in D$.
  We now extend the convergence to the whole interval $[0,T]$.
  Let us fix $q, q' \in D$.
  Recalling the assumptions on $N_k$ and $w_k$, we have that 
  \[
  \frac{w_k}{N_k \tau_k}  \sim  \frac{\tau_k^\beta}{\tau_k \tau_k^{-\alpha}} = \tau_k^{\alpha + \beta - 1} = 1 \, , \quad \frac{1}{\tau_k N_k^2} \sim \frac{1}{\tau_k \tau_k^{-2\alpha}} = \tau_k^{2 \alpha - 1} \to 0 \, .
  \]
  Then we use the convergences $\W_1(\Lambda^k_q,\Lambda_q) \to 0$ and $\W_1(\Lambda^k_{q'},\Lambda_{q'}) \to 0$ as $k \to +\infty$ to pass to the limit in~\eqref{eq:compactness_estimate_final_final} with $s = q$ and $t = q'$ and obtain that
  \[  \label{eq:limit_estimate_on_D}
  \int_{\Delta^{M-1}} \psi(\lambda) \, \d \Lambda_q(\lambda) - \int_{\Delta^{M-1}} \psi(\lambda) \, \d \Lambda_{q'}(\lambda) \leq C |q-q'| \|b\|_{L^\infty} \|\D\psi\|_{L^\infty} \, ,
  \]
  for every $\psi \in C^\infty_c(\R^M)$. 
  
  We can generalize~\eqref{eq:limit_estimate_on_D} to 1-Lipschitz competitors $\psi$ by an approximation argument. 
  Indeed, let $\psi \colon \Delta^{M-1} \to \R$ be a 1-Lipschitz function.
  By Kirszbraun's extension theorem, we can extend $\psi$ to a 1-Lipschitz function (not relabeled) $\psi \colon \R^M \to \R$.
  We can then approximate $\psi$ with a family of smooth functions $\psi_\e \in C^\infty_c(\R^M)$ obtained by convolution with a mollifier and a multiplication by a compactly supported cut-off function identically equal to 1 on a neighborhood of $\Delta^{M-1}$.
  Since $\psi_\e \to \psi$ locally uniformly, we pass to the limit in~\eqref{eq:compactness_estimate_final_final} with $\psi_\e$ in place of $\psi$ and obtain that~\eqref{eq:limit_estimate_on_D} holds for every 1-Lipschitz function $\psi$.

Taking the supremum in~\eqref{eq:limit_estimate_on_D} over all 1-Lipschitz functions $\psi$, we obtain that 
\[ \label{eq:Lipschitz_estimate_on_D}
\W_1(\Lambda_q,\Lambda_{q'}) \leq C |q-q'| \, .
\]
This implies that for $q \to t$ and $q' \to t$, with $q,q' \in D$, we have that $\W_1(\Lambda_q,\Lambda_{q'}) \to 0$, \ie, $\{\Lambda_q \ | \ q \to t\, , q \in D\}$ is a Cauchy sequence in $(\Prob_1(\Delta^{M-1}), \W_1)$.
Since $(\Prob_1(\Delta^{M-1}), \W_1)$ is a complete metric space, we deduce that there exists a limit $\Lambda_t \in \Prob_1(\Delta^{M-1})$ such that $\W_1(\Lambda_q,\Lambda_t) \to 0$ as $q \to t$.

We observe that $\W_1(\Lambda^k_t,\Lambda_t) \to 0$ as $k \to +\infty$ for every $t \in [0,T]$.
For, given $q \to t$, $q \in D$, we have that
\[
\W_1(\Lambda^k_t,\Lambda_t) \leq \W_1(\Lambda^k_t,\Lambda^k_q) + \W_1(\Lambda^k_q,\Lambda_q) + \W_1(\Lambda_q,\Lambda_t)  \, ,
\] 
and the right-hand side converges to 0 as $k \to +\infty$ and then $q \to t$.
  
Finally, passing to the limit in~\eqref{eq:Lipschitz_estimate_on_D} for $q \to t$ and $q' \to s$, we obtain that
\[
\W_1(\Lambda_t,\Lambda_s) \leq C |t-s| \, ,
\]
for every $s,t \in [0,T]$, \ie, $\Lambda \in C([0,T];\Prob_1(\Delta^{M-1}))$ is Lipschitz continuous.

This concludes the proof of the proposition.
\end{proof}
 
We are ready to prove the main result of this paper.

\begin{theorem}
  Let $N_k \sim \tau_k^{-\alpha}$, $w_k = \tau_k^\beta$ and assume that $\alpha, \beta > 0$ and $\alpha + \beta = 1$, $\alpha > \frac{1}{2}$. 
  Let $\Lambda^k_t$ be as in~\eqref{eq:Lambda_k_t} and $\overline \Lambda^k_t$ be as in~\eqref{eq:Lambda_bar_k_t}.  
  Let $t \in [0,T] \mapsto \Lambda_t \in \Prob_1(\Delta^{M-1})$ be the Lipschitz curve obtained in Theorem~\ref{thm:properties_of_Lambda} (we do not relabel the subsequence).
  Then $\Lambda_t$ is a distributional solution to 
  \[ \label{eq:continuous_time_limit}
    \de_t \Lambda_t + \div \big(  b  \Lambda_t \big) = 0 \, , \quad \text{in } (0,T) \times \Delta^{M-1} \, , 
  \]
  with initial condition $\Lambda_0$, \ie, 
  \[
  \int_0^T \int_{\Delta^{M-1}} \big( \de_t \phi(t,\lambda) + \D \phi(t,\lambda) b(\lambda)   \big) \, \d \Lambda_t(\lambda) \, \d t = - \int_{\Delta^{M-1}} \phi(0,\lambda) \, \d \Lambda_0(\lambda) \, ,
  \]
  for every $\phi \in C^\infty_c((-\infty,T) \times \R^M)$.
\end{theorem}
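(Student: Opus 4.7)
The plan is to pass to the limit as $k\to\infty$ in the approximate continuity equation established in Proposition~\ref{prop:approximate_distributional_solution}, exploiting the Wasserstein convergence $\W_1(\Lambda^k_t,\Lambda_t)\to 0$ from Theorem~\ref{thm:properties_of_Lambda} together with Lemma~\ref{lem:Lambda_k_Lambda_bar}. The strategy is to treat each of the three space-time integrals and the remainder term $\tilde\rho_k$ separately.

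First, I would show that $\tilde\rho_k \to 0$ under the scaling \eqref{eqintro:scaling_parameters}. Substituting $N_k \sim \tau_k^{-\alpha}$ and $w_k \sim \tau_k^\beta$ with $\alpha+\beta=1$ into the three components of $\tilde\rho_k$ yields
\[
\frac{w_k}{\tau_k N_k^2}\sim \tau_k^{\alpha},\qquad \frac{w_k^2}{\tau_kN_k}\sim \tau_k^{\beta},\qquad \frac{1}{\tau_k N_k^2}\sim \tau_k^{2\alpha-1},
\]
so each factor vanishes (the last one exactly because $\alpha>\tfrac12$). Since $\phi\in C^\infty_c$ is fixed, $\|\D\phi\|_\infty$ and $\|\D^2\phi\|_\infty$ are absorbed into the constants and $\tilde\rho_k\to 0$.

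Next, I would pass to the limit in each integral. For $\int_0^T\!\!\int_{\Delta^{M-1}}\de_t\phi(t,\lambda)\,\d\Lambda^k_t(\lambda)\,\d t$, fix $t$: since $\partial_t\phi(t,\cdot)$ is continuous on the compact simplex and $\W_1(\Lambda^k_t,\Lambda_t)\to 0$, the Kantorovich--Rubinstein duality (or the last remark in the Notation section) yields $\int \de_t\phi(t,\cdot)\,\d\Lambda^k_t \to \int \de_t\phi(t,\cdot)\,\d\Lambda_t$. The integrand in $t$ is uniformly bounded by $\|\de_t\phi\|_\infty$, so dominated convergence delivers the time integral. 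For the drift term, combine Lemma~\ref{lem:Lambda_k_Lambda_bar} with Theorem~\ref{thm:properties_of_Lambda} via the triangle inequality $\W_1(\overline\Lambda^k_t,\Lambda_t)\le \W_1(\overline\Lambda^k_t,\Lambda^k_t)+\W_1(\Lambda^k_t,\Lambda_t)\to 0$. The function $\lambda\mapsto \D\phi(t,\lambda)b(\lambda)$ is continuous and bounded on $\Delta^{M-1}$ (polynomial in $\lambda$ times bounded derivatives of $\phi$), so pointwise-in-$t$ convergence of the inner integral follows, and a second application of dominated convergence (the integrand is bounded by $\|\D\phi\|_\infty\|b\|_\infty$) handles the $t$-integral. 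The initial term converges because $\W_1(\Lambda^k_0,\Lambda_0)\to 0$ by Theorem~\ref{thm:properties_of_Lambda} applied at $t=0$, and $\phi(0,\cdot)$ is continuous on $\Delta^{M-1}$.

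The main obstacle is conceptual rather than technical: the approximate equation in Proposition~\ref{prop:approximate_distributional_solution} mixes the path $\Lambda^k_t$ (inside $\partial_t\phi$) with the piecewise-constant $\overline\Lambda^k_t$ (inside the drift), so one must ensure both limits produce the same $\Lambda_t$. This is precisely the role of Lemma~\ref{lem:Lambda_k_Lambda_bar}. Everything else is routine application of the compactness in Theorem~\ref{thm:properties_of_Lambda} combined with dominated convergence on $[0,T]$, and collecting the three limits (plus the vanishing of $\tilde\rho_k$) yields exactly the distributional continuity equation with initial datum $\Lambda_0$.
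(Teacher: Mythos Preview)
Your proposal is correct and follows essentially the same approach as the paper: invoke Proposition~\ref{prop:approximate_distributional_solution}, check that the scaling assumptions kill $\tilde\rho_k$, and pass to the limit in each of the three integrals using the $\W_1$-convergence from Theorem~\ref{thm:properties_of_Lambda} together with Lemma~\ref{lem:Lambda_k_Lambda_bar} for the drift term. Your write-up is in fact slightly more explicit than the paper's (you spell out the dominated-convergence step in~$t$ and the triangle-inequality argument for $\overline\Lambda^k_t$), but the logic is identical.
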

\begin{proof}
Let us fix $\phi \in C^\infty_c((-\infty,T) \times \R^M)$.
By the assumptions on $N_k$ and $w_k$ and by Proposition~\ref{prop:approximate_distributional_solution}, we have that
\[
\int_0^T \int_{\Delta^{M-1}} \de_t \phi(t,\lambda)  \, \d \Lambda^k_t(\lambda) \, \d t + \int_0^T \int_{\Delta^{M-1}} \D \phi(t,\lambda) b(\lambda)   \, \d \overline \Lambda^k_t(\lambda) \, \d t  = - \int_{\Delta^{M-1}} \phi(0,\lambda) \, \d \Lambda^k_0(\lambda) + \tilde \rho_k \, ,
\]
where
\[
\tilde \rho_k \sim \tau_k^{\alpha + \beta - 1} \O\Big( \frac{\|\D\psi\|_\infty}{N_k} \Big) + \tau_k^{\alpha + \beta - 1} \O\big(\|\D \psi\|_\infty w_k \big)  +  \tau_k^{2\alpha - 1}\O\big(\|\D^2 \psi\|_{L^\infty} \big) \to 0 \, , \quad \text{as } k \to +\infty \, .
\]
Moreover, by Theorem~\ref{thm:properties_of_Lambda}, we have that 
\[
  \int_0^T \int_{\Delta^{M-1}} \de_t \phi(t,\lambda)  \, \d \Lambda^k_t(\lambda) \, \d t \to \int_0^T \int_{\Delta^{M-1}} \de_t \phi(t,\lambda)  \, \d \Lambda_t(\lambda) \, \d t \, , \quad \text{as } k \to +\infty \, ,
\]
and, exploiting also Lemma~\ref{lem:Lambda_k_Lambda_bar},
\[
  \int_0^T \int_{\Delta^{M-1}} \D \phi(t,\lambda) b(\lambda)   \, \d \overline \Lambda^k_t(\lambda) \, \d t \to \int_0^T \int_{\Delta^{M-1}} \D \phi(t,\lambda) b(\lambda)   \, \d \Lambda_t(\lambda) \, \d t \, , \quad \text{as } k \to +\infty \, .
\]
We have shown that 
\[
\int_0^T \int_{\Delta^{M-1}} \de_t \phi(t,\lambda)  \, \d \Lambda_t(\lambda) \, \d t + \int_0^T \int_{\Delta^{M-1}} \D \phi(t,\lambda) b(\lambda)   \, \d \Lambda_t(\lambda) \, \d t  = - \int_{\Delta^{M-1}} \phi(0,\lambda) \, \d \Lambda_0(\lambda) \, ,
\]
concluding the proof of the theorem.
\end{proof}

\begin{remark}
  In the next section we recall that the continuity equation~\eqref{eq:continuous_time_limit} has a unique solution. 
  This implies that the limit curve $t \in [0,T] \mapsto \Lambda_t \in \Prob_1(\Delta^{M-1})$ is independent of the subsequence extracted in Theorem~\ref{thm:properties_of_Lambda}.
  Hence, \emph{a posteriori}, the convergence $\W_1(\Lambda^k_t,\Lambda_t) \to 0$ as $k \to +\infty$ holds for the whole sequence $k \to +\infty$.
\end{remark}
  
\section{Replicator dynamics} \label{sec:replicator_dynamics}

We conclude this section by recalling the connection between the continuity equation 
\[
\de_t \Lambda_t + \div \big(  b  \Lambda_t \big) = 0 \, , \quad \text{in } (0,T) \times \Delta^{M-1} \, ,
\]
and the replicator dynamics.
This will explain in which sense the replicator dynamics is a mean-field limit of the finite-population stochastic process described in Section~\ref{sec:discrete_stochastic_process}.

The replicator dynamics is governed by the following system of ODEs:
\[
\begin{cases}
  \displaystyle \frac{\d}{\d t} \lambda_{u_i}(t) = \lambda_{u_i}(t) \big( (A \lambda(t))_i - (A \lambda(t)) \cdot \lambda(t) \big) \, , &  i = 1, \dots, M \, , \ t \in (0,T) \, , \\
  \lambda(0) = \lambda_0 \, .
\end{cases}
\]
In a more compact form, using the vector field $b \colon \Delta^{M-1} \to \R^M$ introduced in~\eqref{eq:b}, 
\[
\begin{cases}
\displaystyle \frac{\d}{\d t} \lambda(t) = b(\lambda(t)) \, , &  t \in (0,T) \, , \\
\lambda(0) = \lambda_0 \, .
\end{cases}
\]
Let us denote by $\Psi \colon (0,T) \times \Delta^{M-1} \to \Delta^{M-1}$ the flow of the vector field $b$, \ie, $\Psi(t,\lambda_0)$ is the solution of the ODE system above with initial condition $\lambda_0$.

A first connection between the continuity equation and the replicator dynamics is given by the following result.
This fact is known, see, \eg, \cite[Lemma~8.1.6]{AmbGigSav08}.

\begin{proposition}
   Let $\Lambda_0 \in \Prob_1(\Delta^{M-1})$ and let 
  \[ \label{eq:transported_measure}
  \Lambda_t := \Psi(t, \cdot)_\# \Lambda_0 \in \Prob_1(\Delta^{M-1}) \, , \quad t \in [0,T) \, .
  \]
  Then $\Lambda_t$ is a distributional solution to the continuity equation
  \[
  \de_t \Lambda_t + \div \big(  b  \Lambda_t \big) = 0 \, , \quad \text{in } (0,T) \times \Delta^{M-1} \, ,
  \]
  with initial condition $\Lambda_0$.
\end{proposition}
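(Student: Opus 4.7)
The strategy is to use the characteristic flow $\Psi$ directly. Fix a test function $\phi \in C^\infty_c((-\infty,T)\times\R^M)$. By the very definition of pushforward,
\[
\int_{\Delta^{M-1}} \phi(t,\lambda)\,\d\Lambda_t(\lambda) = \int_{\Delta^{M-1}} \phi(t,\Psi(t,\lambda_0))\,\d\Lambda_0(\lambda_0),
\]
and similarly for the spatial term, $\int D\phi(t,\lambda)b(\lambda)\,\d\Lambda_t = \int D\phi(t,\Psi(t,\lambda_0))\,b(\Psi(t,\lambda_0))\,\d\Lambda_0(\lambda_0)$. Everything then reduces to a pointwise computation along characteristics.

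Next I would differentiate $t \mapsto \phi(t,\Psi(t,\lambda_0))$. By the chain rule and the defining ODE $\frac{\d}{\d t}\Psi(t,\lambda_0) = b(\Psi(t,\lambda_0))$,
\[
\frac{\d}{\d t}\phi(t,\Psi(t,\lambda_0)) = \de_t\phi(t,\Psi(t,\lambda_0)) + D\phi(t,\Psi(t,\lambda_0))\,b(\Psi(t,\lambda_0)).
\]
Integrate in $t$ over $[0,T]$; since $\phi \in C^\infty_c((-\infty,T)\times\R^M)$, the boundary term at $t=T$ vanishes and the one at $t=0$ equals $\phi(0,\lambda_0)$, giving
\[
-\phi(0,\lambda_0) = \int_0^T \bigl[\de_t\phi + D\phi\cdot b\bigr](t,\Psi(t,\lambda_0))\,\d t.
\]
Integrating this identity against $\Lambda_0$ and applying Fubini on the right-hand side, then invoking the pushforward identity in reverse, yields exactly the required distributional formulation
\[
\int_0^T\!\!\int_{\Delta^{M-1}}\bigl(\de_t\phi(t,\lambda) + D\phi(t,\lambda)b(\lambda)\bigr)\,\d\Lambda_t(\lambda)\,\d t = -\int_{\Delta^{M-1}}\phi(0,\lambda)\,\d\Lambda_0(\lambda).
\]

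The only technical points requiring care are: (i) global existence and uniqueness of the flow $\Psi$ on $\Delta^{M-1}$, which follows since $b$ is smooth (indeed polynomial) on a neighborhood of the compact simplex and is tangent to $\Delta^{M-1}$ by the remark following the definition of $b$, so $\Psi(t,\cdot)$ maps $\Delta^{M-1}$ to itself for all $t \in [0,T]$; and (ii) the application of Fubini, which is immediate because $\phi$ has compact support, $\Delta^{M-1}$ is compact, and $b$ is bounded on $\Delta^{M-1}$, so the integrand is uniformly bounded on $[0,T]\times\Delta^{M-1}$. There is no real obstacle here: the result is essentially the method of characteristics packaged in measure-theoretic form, and the two small checks above are where the hypotheses of the proposition (tangency of $b$, $\Lambda_0 \in \Prob_1$) come in.
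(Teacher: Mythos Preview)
Your argument is correct and is precisely the standard method-of-characteristics proof of this fact. The paper does not actually give its own proof of this proposition: it simply states that the result is known and cites \cite[Lemma~8.1.6]{AmbGigSav08}, so there is nothing to compare against beyond noting that your write-up is exactly the argument one finds in that reference.
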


In fact, \eqref{eq:transported_measure} characterizes the solution of the continuity equation. 
See also~\cite[Proposition~8.1.8]{AmbGigSav08}.

\begin{theorem}
  Let $\Lambda_0 \in \Prob_1(\Delta^{M-1})$. Let $t \in [0,T] \mapsto \Lambda_t \in \Prob_1(\Delta^{M-1})$ be a narrowly continuous path, being a distributional solution to the continuity equation  
  \[
    \de_t \Lambda_t + \div \big(  b  \Lambda_t \big) = 0 \, , \quad \text{in } (0,T) \times \Delta^{M-1} \, .
  \]
  Then 
  \[
  \Lambda_t = \Psi(t, \cdot)_\# \Lambda_0 \, , \quad t \in [0,T) \, .
  \]
\end{theorem}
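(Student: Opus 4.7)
The plan is to deduce uniqueness from the superposition principle, in the spirit of the approach cited at the end of the introduction. First I would observe that, since the components of $b$ are polynomials in $\lambda$ and $b$ is tangent to the simplex (as noted in the Remark following~\eqref{eq:b}), the restriction $b|_{\Delta^{M-1}}$ is smooth and Lipschitz continuous, and $\Delta^{M-1}$ is invariant under the flow. Classical Cauchy--Lipschitz theory then guarantees that $\Psi \colon [0,T] \times \Delta^{M-1} \to \Delta^{M-1}$ is well defined and that $t \mapsto \Psi(t,\lambda_0)$ is the unique absolutely continuous curve solving $\dot\gamma = b(\gamma)$ with $\gamma(0) = \lambda_0$.

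Next I would invoke the superposition principle (cf.~\cite[Theorem~8.2.1]{AmbGigSav08} and~\cite{AmbTre14}, already cited in the introduction): for any narrowly continuous distributional solution $t \mapsto \Lambda_t$ of $\de_t \Lambda_t + \div(b \Lambda_t) = 0$, there exists a probability measure $\eta \in \Prob(C([0,T];\Delta^{M-1}))$ such that $(\ev_t)_\# \eta = \Lambda_t$ for every $t \in [0,T]$ and $\eta$-a.e.\ curve $\gamma$ is absolutely continuous and satisfies $\dot\gamma(t) = b(\gamma(t))$ for a.e.\ $t$. The integrability hypothesis $\int_0^T \int |b|\,\d\Lambda_t\,\d t < \infty$ required to apply the principle is trivial in our setting because $\Delta^{M-1}$ is compact and $b$ is bounded on it.

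Once $\eta$ is produced, the Lipschitz regularity of $b$ on $\Delta^{M-1}$ yields uniqueness of the characteristic ODE: any integral curve $\gamma$ is completely determined by $\gamma(0)$, and hence coincides with $t \mapsto \Psi(t,\gamma(0))$. Therefore $\eta$ is concentrated on the graph $\{\gamma \in C([0,T];\Delta^{M-1}) : \gamma(t) = \Psi(t,\gamma(0))\ \forall t \in [0,T]\}$, and factoring the evaluation map gives
\[
\Lambda_t \;=\; (\ev_t)_\# \eta \;=\; \bigl(\Psi(t,\cdot) \circ \ev_0\bigr)_\# \eta \;=\; \Psi(t,\cdot)_\# (\ev_0)_\# \eta \;=\; \Psi(t,\cdot)_\# \Lambda_0 \, ,
\]
which is the desired identity.

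The main obstacle I anticipate is purely technical: verifying cleanly that the superposition principle in the form of~\cite[Theorem~8.2.1]{AmbGigSav08} applies in the current setting, since that result is usually stated for measures on $\R^M$ and vector fields defined on the whole space. A clean way around this is to extend $b$ to a compactly supported Lipschitz field $\tilde b$ on $\R^M$ via a smooth cutoff in a neighbourhood of $\Delta^{M-1}$, view $\Lambda_t$ as a probability measure on $\R^M$, and apply the principle to $(\Lambda_t,\tilde b)$; since $\mathrm{supp}\,\Lambda_t \subset \Delta^{M-1}$ and $\Delta^{M-1}$ is $\Psi$-invariant, $\eta$-a.e.\ curve stays in $\Delta^{M-1}$, where $\tilde b = b$, and the argument above goes through without modification. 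An alternative route, avoiding the superposition principle altogether, would be a duality argument based on solving backwards the transport equation $\de_t\phi + b\cdot\nabla\phi = 0$ with terminal datum $\phi(T,\cdot) = \psi$ and then inserting a cutoff $\chi_\e(t)\phi(t,x)$ into the weak formulation; this works thanks to the narrow continuity of $t\mapsto \Lambda_t$, but is somewhat more laborious than the superposition approach.
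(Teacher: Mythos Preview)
Your proposal is correct and matches the approach the paper points to: the paper does not actually give a proof of this theorem but simply refers the reader to \cite[Proposition~8.1.8]{AmbGigSav08}, and the introduction explicitly identifies the superposition principle \cite[Theorem~8.2.1]{AmbGigSav08}, \cite{AmbTre14} as the mechanism linking the Eulerian and Lagrangian descriptions. Your argument via superposition, together with the Lipschitz uniqueness of characteristics on the compact simplex, is exactly the intended route; the extension of $b$ to a compactly supported Lipschitz field on $\R^M$ to fit the ambient-space formulation is the right technical fix, and your alternative duality argument is essentially the content of the cited proposition in \cite{AmbGigSav08}.
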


The two results above explain the connection between the continuity equation and the replicator dynamics.

\vspace{2em}

{\bfseries Acknowledgements.} M.\ Morandotti and G.\ Orlando are members of Gruppo Nazionale per l'Analisi Matematica, la Probabilit\`a e le loro Applicazioni (GNAMPA) of the Istituto Nazionale di Alta Matematica (INdAM).

  M.\ Morandotti has been partially supported by the GNAMPA project 2024 \emph{Analisi asintotica di modelli evolutivi di interazione} from Istituto Nazionale di Alta Matematica ``F.\ Severi''.
  
  M.\ Morandotti's research was partly carried out within the project \emph{Geometric-Analytic Methods for PDEs and Applications} funded by European Union - Next Generation EU within the MUR PRIN 2022 program (D.D.\ 104 - 02/02/2022 - project code 2022SLTHCE). This manuscript reflects only the authors’ views and opinions and the Ministry cannot be considered responsible for them.

  G.\ Orlando has been partially supported by the Research Project of National Relevance ``Evolution problems involving interacting scales'' granted by the Italian Ministry of Education, University and Research (MUR PRIN 2022, project code 2022M9BKBC, Grant No. CUP D53D23005880006).

  G.\ Orlando acknowledges financial support under the National Recovery and Resilience Plan (NRRP) funded by the European Union - NextGenerationEU -  
Project Title ``Mathematical Modeling of Biodiversity in the Mediterranean sea: from bacteria to predators, from meadows to currents'' - project code P202254HT8 - CUP B53D23027760001 -  
Grant Assignment Decree No. 1379 adopted on 01/09/2023 by the Italian Ministry of University and Research (MUR).

G.\ Orlando was partially supported by the Italian Ministry of University and Research under the Programme ``Department of Excellence'' Legge 232/2016 (Grant No. CUP - D93C23000100001).

\bibliographystyle{siam}  
\bibliography{bibliography}
\end{document}